\theoremstyle{definition}
\newtheorem{definition}{Definition}[section]
\newtheorem{remark}[definition]{Remark}
\newtheorem{example}[definition]{Example}
\theoremstyle{plain}
\newtheorem{thm}[definition]{Theorem}
\newtheorem{proposition}[definition]{Proposition}
\newtheorem{lemma}[definition]{Lemma}
\newtheorem{corollary}[definition]{Corollary}
\newcommand{\trop}{\operatorname{trop}}
\newcommand{\Sol}{\operatorname{Sol}}
\renewcommand{\Vert}{\operatorname{Vert}}
\newcommand{\Supp}{\operatorname{Supp}}
\newcommand{\Val}{\operatorname{Val}}
\title{The Fundamental Theorem of Tropical Partial Differential Algebraic Geometry}
\author{Sebastian Falkensteiner\and Cristhian Garay-L\'opez\and Mercedes Haiech\and Marc Paul Noordman\and Zeinab Toghani \and Fran{\c{c}}ois Boulier}
\begin{document}

\maketitle
\let\thefootnote\relax\footnotetext{
\emph{2010 Mathematics Subject Classification}. 13P15, 13N99, 14T99, 52B20.\\
\emph{Keywords and phrases.} Differential Algebra, Tropical Differential Algebraic Geometry, Power Series Solutions, Newton Polygon, Arc Spaces.

This research project and the fifth author was supported by the European Commission, having received funding from the European Union's Horizon 2020 research and innovation programme under grant agreement number 792432. The first author was supported by the Austrian Science Fund (FWF): P 31327-N32. The second author was supported by CONACYT through Project 299261. 
The sixth author would like to thank the bilateral project ANR-17-CE40-0036 and DFG-391322026 SYMBIONT for its support.}
\let\thefootnote\svthefootnote

\begin{abstract}
Tropical Differential Algebraic Geometry considers difficult or even intractable problems in Differential Equations and tries to extract information on their solutions from a restricted structure of the input. 
The Fundamental Theorem of Tropical Differential Algebraic Geometry states that the support of solutions of systems of ordinary differential equations with formal power series coefficients over an uncountable algebraically closed field of characteristic zero can be obtained by solving a so-called tropicalized differential system. Tropicalized differential equations work on a completely different algebraic structure which may help in theoretical and computational questions. We show that the Fundamental Theorem can be extended to the case of systems of partial differential equations by introducing vertex sets of Newton polygons.
\end{abstract}

\section{Introduction}
Given an algebraically closed field of characteristic zero $K$, we consider the partial differential ring $(R_{m,n},D)$, where $$R_{m,n}=K[[t_1,\ldots,t_m]]\{x_1,\ldots,x_n\}$$ and $D=(\tfrac{\partial}{\partial t_k}\::\:k=1,\ldots,m)$ for $n,m\geq1$ (see Section \ref{S_PDAG} for definitions).
Up to now, tropical differential algebra has been limited to the study of the relation between the set of solutions $\Sol(G)\subseteq K[[t]]^n$ of  differential ideals $G$ in $R_{1,n}$ and their corresponding {\it tropicalizations}, which are certain 
polynomials $p$ with coefficients in a tropical semiring $\mathbb{T}[t]:=(\mathbb{Z}_{\geq0}\cup\{\infty\},+,\min{})$ and with a set of solutions $\Sol(p) \subseteq \mathcal{P}(\mathbb{Z}_{\geq0})^n$, see~\cite{grigoriev2017tropical} and~\cite{AGT16}. These elements $S\in \Sol(p)$ can be found by looking at {\it evaluations} $p(S)\in \mathbb{T}[t]$ where the usual tropical vanishing condition holds.

\medskip
In this paper, we consider the case $m>1$. On this account, we work with elements in $\mathbb{Z}_{\ge 0}^m$, which requires new techniques. We show that considering the Newton polygons and their vertex sets is the appropriate method for formulating and proving our generalization of the Fundamental Theorem of Tropical Differential Algebraic Geometry. 
We remark that in the case of $m=1$ the definitions and properties presented here coincide with the corresponding ones in~\cite{AGT16} and therefore, this work can indeed be seen as a generalization.

\medskip
The problem of finding power series solutions of systems of partial differential equations has been extensively studied in the literature, but is very limited in the general case. In fact, we know from~\cite[Theorem 4.11]{DenefLipshitz} that there is already no algorithm for deciding whether a given linear partial differential equation with polynomial coefficients has a solution or not. The Fundamental Theorem, as it is stated in here, helps to find necessary conditions for the support of possible solutions.

\medskip
The structure of the paper is as follows. In Section~\ref{S_PDAG} we cover the necessary material from partial differential algebra. In Section~\ref{sec:the tropical semi-ring} we introduce the semiring of supports $\mathcal{P}(\mathbb{Z}_{\geq0}^m)$, the semiring of vertex sets $\mathbb{T}[[t_1,\ldots,t_m]]$ and the vertex homomorphism $\Vert\colon\mathcal{P}(\mathbb{Z}_{\geq0}^m)\longrightarrow\mathbb{T}[[t_1,\ldots,t_m]]$. In Section~\ref{S_DRPS} we introduce the support and the tropicalization maps. In Section~\ref{S_TDP} we define the set of tropical differential polynomials $\mathbb{T}_{m,n}$, the notion of tropical solutions for them, and the tropicalization morphism $\trop\colon R_{m,n}\to\mathbb{T}_{m,n}$. The main result is Theorem~\ref{fundamental theorem}, which is proven in Section~\ref{S_TFT}. The proof we give here differs essentially from that one in~\cite{AGT16} for the case of $m=1$. In Section~\ref{Section_ExLim} we give some examples to illustrate our results.

\medskip
In the following we will use the conventions that for a set $S$ we denote by $\mathcal{P}(S)$ its power set, and by $K$ we denote an algebraically closed field of characteristic zero.

\section{Partial differential algebraic geometry}
\label{S_PDAG}
Here we recall the preliminaries for partial differential algebraic geometry. The reference book for differential algebra is~\cite{Kolchin73}.

\medskip
A \textbf{partial differential ring} is a pair $(R,D)$ consisting of a commutative ring with unit $R$ and a set $D=\{\delta_1,\ldots,\delta_m\}$ of $m > 1$  \textbf{derivations} which act on~$R$ and are pairwise commutative. We denote by $\Theta$ the free commutative monoid generated by $D$. If $J=(j_1,\ldots,j_m)$ is an element of the monoid  $\mathbb{Z}^m_{\geq0}=(\mathbb{Z}^m_{\geq0},+,0)$,
we denote $\Theta(J) = \delta_1^{j_1} \cdots \delta_m^{j_m}$
the \textbf{derivative operator} defined by~$J$.
If $\varphi$ is any element of~$R$, then $\Theta(J)\varphi$ is
the element of~$R$ obtained by application of the derivative operator $\Theta(J)$ on~$\varphi$.

\medskip
Let $(R,D)$ be a partial differential ring and $x_1,\ldots,x_n$
be~$n$ \textbf{differential indeterminates}.
The monoid~$\Theta$
acts on the differential indeterminates, giving 
the infinite set of the \textbf{derivatives} which are denoted by
$x_{i,J}$ with $1 \leq i \leq n$ and $J \in \mathbb{Z}^m_{\geq0}$.
Given any $1 \leq k \leq m$ and any derivative $x_{i,J}$, the action of~$\delta_k$ on $x_{i,J}$ is
defined by $\delta_k(x_{i,J})=x_{i,J+e_k}$ where $e_k$ is the $m$-dimensional vector whose $k$-th coordinate is~$1$ and 
all other coordinates are zero. 
One denotes $R\{x_1,\ldots,x_n\}$ the ring of the polynomials, with coefficients in $R$, the indeterminates of which are the derivatives. More formally, $R\{x_1,\ldots, x_n\}$ consists of all $R$-linear combinations of differential monomials, where a differential monomial in $n$ independent variables of order less than or equal to $r$ is an expression of the form

\begin{equation}
\label{differential_monomial}
    E_M:=\prod_{\substack{1\leq i\leq n\\ ||J||_\infty\leq r}}x_{i,J}^{M_{i,J}} 
\end{equation}
where $J=(j_1,\ldots,j_m)\in \mathbb{Z}_{\geq0}^m$, $||J||_\infty:=\text{max}_i\{j_i\}=\text{max}(J)$ and $M=(M_{i,J}) \in (\mathbb{Z}_{\ge0})^{n\times(r+1)^m}$. 

\medskip
The pair $(R\{x_1,\ldots,x_n\},D)$ then constitutes a \textbf{differential polynomial ring}. 
A differential polynomial  $P\in R\{x_1,\ldots,x_n\}$ induces an evaluation map from $R^n$ to $R$ given by
\[P\colon R^n \to R,\quad (\varphi_1,\ldots,\varphi_n)\mapsto P|_{x_{i,J}=\Theta(J)\varphi_i},\]
where $P|_{x_{i,J}=\Theta(J)\varphi_i}$ is the element of $R$ obtained by substituting $\Theta(J)\varphi_i$ for $x_{i,J}$ .
 
\medskip
A \textbf{zero} or \textbf{solution} of $P\in R\{x_1,\ldots,x_n\}$ is an $n$-tuple $\varphi=(\varphi_1,\ldots,\varphi_n)\in R^n$ such that $P(\varphi)=0$.
An $n$-tuple $\varphi \in R^n$ is a solution of a system of differential polynomials~$\Sigma \subseteq R\{x_1,\ldots,x_n\}$ if it is a solution of every element of~$\Sigma$. We denote by $\Sol(\Sigma)$ the solution set of the system $\Sigma$.

\medskip
A \textbf{differential ideal} of $R\{x_1,\ldots,x_n\}$
is an ideal of that ring which is stable under the
action of $\Theta$. A differential ideal
is said to be \textbf{perfect} if it is equal to
its radical. If $\Sigma \subseteq R\{x_1,\ldots,x_n\}$, one denotes by $[\Sigma]$ the \textbf{differential ideal generated by~$\Sigma$} and by $\{ \Sigma \}$ the \textbf{perfect differential ideal generated by~$\Sigma$}, which is defined as the intersection of all perfect
differential ideals containing~$\Sigma$.

\medskip
For $m,n\geq1$, we will denote by $R_m$ the partial differential ring 
$$(K[[t_1,\ldots,t_m]],D)$$ 
where $D=\{\tfrac{\partial}{\partial t_1},\ldots,\tfrac{\partial}{\partial t_m}\}$, and the partial differential ring $(R_m\{x_1,\ldots,x_n\},D)$ will be denoted by $R_{m,n}$ . The proof of the following proposition can be found in ~\cite{BH19}.
 
\begin{proposition}\label{prop_Ritt_Raudenbush}
For any $\Sigma \subseteq R_{m,n}$, there exists a finite subset~$\Phi$ of~$\Sigma$ such that $\Sol(\Sigma)=\Sol(\Phi)$.
\end{proposition}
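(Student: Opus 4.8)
The plan is to reduce the statement to the Ritt--Raudenbush basis theorem for the partial differential polynomial ring $R_{m,n}$, using that solution sets over $R_m=K[[t_1,\ldots,t_m]]$ only detect the perfect differential ideal generated by a system.

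First I would prove that $\Sol(\Sigma)=\Sol(\{\Sigma\})$ for every $\Sigma\subseteq R_{m,n}$. The inclusion $\Sol(\{\Sigma\})\subseteq\Sol(\Sigma)$ is clear from $\Sigma\subseteq\{\Sigma\}$. For the reverse, fix $\varphi\in\Sol(\Sigma)$ and consider the substitution map $\varepsilon_\varphi\colon R_{m,n}\to R_m$, $P\mapsto P(\varphi)$. Since $\varepsilon_\varphi$ is a ring homomorphism and $\delta_k(P(\varphi))=(\delta_k P)(\varphi)$ for each $k$ (substitution commutes with the derivations, as it sends $x_{i,J}$ to $\Theta(J)\varphi_i$), its kernel $I_\varphi$ is a differential ideal; as $R_m$ is an integral domain, $I_\varphi$ is prime, hence perfect. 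From $\Sigma\subseteq I_\varphi$ we get $\{\Sigma\}\subseteq I_\varphi$, i.e.\ every element of $\{\Sigma\}$ vanishes at $\varphi$, so $\varphi\in\Sol(\{\Sigma\})$. Thus it suffices to find a finite $\Phi\subseteq\Sigma$ with $\{\Phi\}=\{\Sigma\}$.

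Next I would record the directed-union description $\{\Sigma\}=\bigcup\{\,\{\Phi\}:\Phi\subseteq\Sigma\text{ finite}\,\}$. The right-hand side is a directed union of perfect differential ideals, since $\{\Phi_1\},\{\Phi_2\}\subseteq\{\Phi_1\cup\Phi_2\}$; it is a differential ideal (a directed union of such), and it is perfect, because $a^k$ in the union forces $a^k\in\{\Phi\}$ for some finite $\Phi$, whence $a\in\{\Phi\}$ lies in the union. Containing $\Sigma$ and contained in $\{\Sigma\}$, it equals $\{\Sigma\}$.

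Finally I would apply the Ritt--Raudenbush basis theorem for $R_{m,n}$: since $R_m=K[[t_1,\ldots,t_m]]$ is a Noetherian ring containing $\mathbb{Q}$, the ring $R_{m,n}$ satisfies the ascending chain condition on perfect differential ideals, so $\{\Sigma\}$ is the perfect differential ideal generated by some finite set $G=\{g_1,\ldots,g_s\}\subseteq\{\Sigma\}$. By the previous paragraph each $g_i$ lies in $\{\Phi_i\}$ for some finite $\Phi_i\subseteq\Sigma$; put $\Phi=\Phi_1\cup\cdots\cup\Phi_s$, a finite subset of $\Sigma$. Then $G\subseteq\{\Phi\}$, so $\{\Sigma\}=\{G\}\subseteq\{\Phi\}\subseteq\{\Sigma\}$, giving $\{\Phi\}=\{\Sigma\}$ and hence $\Sol(\Phi)=\Sol(\{\Phi\})=\Sol(\{\Sigma\})=\Sol(\Sigma)$. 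The genuinely non-formal ingredient is the chain condition for perfect differential ideals of $R_{m,n}$, established in~\cite{BH19}; the reductions around it are routine bookkeeping about perfect differential ideals together with the reducedness of $R_m$, so that is the step I expect to be the real obstacle.
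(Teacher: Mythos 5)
The paper itself does not prove this proposition; it only cites~\cite{BH19}, so there is no in-house argument to compare with step by step. Measured on its own terms, your skeleton is sound and matches the standard route: the identity $\Sol(\Sigma)=\Sol(\{\Sigma\})$ via the observation that the kernel of the evaluation map $P\mapsto P(\varphi)$ is a prime (hence perfect) differential ideal of $R_{m,n}$, the directed-union description of $\{\Sigma\}$ by perfect ideals of finite subsets, and the extraction of a finite $\Phi\subseteq\Sigma$ from a finite basis are all correct and routine.

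The one step that is not covered by what you invoke is precisely the load-bearing one. The classical Ritt--Raudenbush basis theorem is proved for differential polynomial rings over a differential \emph{field} of characteristic zero; your assertion that ``$R_m=K[[t_1,\ldots,t_m]]$ is a Noetherian ring containing $\mathbb{Q}$, hence $R_{m,n}$ satisfies the ascending chain condition on perfect differential ideals'' is exactly the ring-coefficient extension that does not follow formally from the field case (the obvious attempt---extend to the fraction field, take a finite basis there, contract back---fails because contraction introduces saturation by nonzero coefficients, so a perfect ideal of $R_{m,n}$ need not equal the perfect ideal generated by the contracted basis). Whether \cite{BH19} establishes this ACC statement or proves the proposition by another route, as written your proof outsources the same content the paper outsources, but attributes to it a specific form that needs justification. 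Note, however, that your argument can be completed using only the classical theorem: run everything inside $L\{x_1,\ldots,x_n\}$ with $L=\operatorname{Frac}(K[[t_1,\ldots,t_m]])$. Evaluation at $\varphi\in R_m^n$ extends to $L\{x_1,\ldots,x_n\}\to L$ and its kernel is still a prime differential ideal, so $\Sol(\Sigma)$ is determined by the perfect differential ideal generated by $\Sigma$ in $L\{x_1,\ldots,x_n\}$; your directed-union lemma holds there verbatim, and Ritt--Raudenbush over the differential field $L$ gives a finite $\Phi\subseteq\Sigma$ with $\{\Phi\}=\{\Sigma\}$ in $L\{x_1,\ldots,x_n\}$, whence $\Sol(\Phi)=\Sol(\Sigma)$. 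This small detour removes any appeal to a chain condition on perfect differential ideals of $R_{m,n}$ itself.
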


\section{The semirings of supports and vertex sets}
\label{sec:the tropical semi-ring}
In this part we introduce and give some properties on our main idempotent semirings, namely the semiring of supports $\mathcal{P}(\mathbb{Z}_{\geq0}^m)$, the semiring of vertex sets $\mathbb{T}[[t_1,\ldots,t_m]]$ and the map $\Vert\colon\mathcal{P}(\mathbb{Z}_{\geq0}^m)\rightarrow\mathbb{T}[[t_1,\ldots,t_m]]$ which is a homomorphism of semirings.

\medskip
Recall that a commutative semiring $S$ is a tuple $(S,+,\times,0,1)$ such that $(S,+,0)$ and $(S,\times,1)$ are commutative monoids and additionally, for all $a,b,c \in S$ it holds that
\begin{enumerate}
    \item $a\times (b+c)=a\times b+a\times c$;
    \item $0\times a=0$.
\end{enumerate} 
A semiring is called \textbf{idempotent} if $a + a = a$ for all $a \in S$. 
A map $f\colon S_1\longrightarrow S_2$ between semirings is a morphism if it induces morphisms at the level of monoids.

\medskip
For $m\geq1$, we denote by $\mathcal{P}(\mathbb{Z}^m_{\geq0})$ the idempotent semiring whose elements are the subsets of $\mathbb{Z}^m_{\geq0}$ equipped with the union $X\cup Y$ as sum and the Minkowski sum $X+Y=\{x+y\::\:x\in X, y\in Y\}$ as product. 
We call it the \textbf{semiring of supports}.
For $n\in\mathbb{Z}_{\geq1}$ and $X \in \mathcal{P}(\mathbb{Z}_{\geq 0}^m)$, 
the notation $nX$ will indicate $\underbrace{X+ \cdots + X}_{n \ \text{times}}$. By convention we set $0X = \{(0,\ldots,0)\}$.

\medskip
We define the \textbf{Newton polygon} $\mathcal{N}(X)\subseteq\mathbb{R}^m_{\geq0}$ of $X\in\mathcal{P}(\mathbb{Z}^m_{\geq0})$ as the convex hull of $X+\mathbb{Z}^m_{\geq0}$. We call $x \in X$ a \textbf{vertex} if $x \notin \mathcal{N}(X \setminus \{x\})$, and we denote by $\Vert X$ the set of vertices of $X$.

\begin{lemma}\label{N_equal_impies_Vert_equal}
Let $S,T \in \mathcal{P}(\mathbb{Z}_{\geq0}^m)$ such that $\mathcal{N}(S)= \mathcal{N}(T)$. Then $\Vert S=\Vert T$.
\end{lemma}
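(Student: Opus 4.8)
\emph{Plan.} The definition of $\Vert X$ refers to $X$ itself (through $X\setminus\{x\}$), not merely to the set $\mathcal N(X)$, so the content of the lemma is that this apparent dependence disappears. My plan is to show that $\Vert X$ coincides with the set of extreme points (equivalently, the $0$-dimensional faces) of the polyhedron $\mathcal N(X)$, which I will denote $\operatorname{ext}(\mathcal N(X))$; this is manifestly an invariant of the subset $\mathcal N(X)\subseteq\mathbb R^m$. Granting the identity $\Vert X=\operatorname{ext}(\mathcal N(X))$, the lemma is immediate: $\mathcal N(S)=\mathcal N(T)$ forces $\Vert S=\operatorname{ext}(\mathcal N(S))=\operatorname{ext}(\mathcal N(T))=\Vert T$. (The case $X=\emptyset$ is trivial, so I assume $S,T\neq\emptyset$ throughout.)

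\emph{Step 1: structure of $\mathcal N(X)$.} Applying Dickson's lemma to the monomial ideal generated by $X$ produces a finite set $G\subseteq X+\mathbb Z_{\geq0}^m$ with $G+\mathbb Z_{\geq0}^m=X+\mathbb Z_{\geq0}^m$, so that $\mathcal N(X)=\operatorname{conv}(G)+\mathbb R_{\geq0}^m$ is a polyhedron; since $\mathcal N(X)\subseteq\mathbb R_{\geq0}^m$ it contains no line, so it is pointed and its recession cone is $\mathbb R_{\geq0}^m$. By the Minkowski--Weyl decomposition of pointed polyhedra, $\operatorname{ext}(\mathcal N(X))$ is finite and $\mathcal N(X)=\operatorname{conv}(\operatorname{ext}(\mathcal N(X)))+\mathbb R_{\geq0}^m$. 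Moreover every $v\in\operatorname{ext}(\mathcal N(X))$ lies in $X$: writing $v$ as a finite convex combination of points of $X+\mathbb Z_{\geq0}^m\subseteq\mathcal N(X)$, extremality forces all of them to equal $v$, so $v=x+a$ with $x\in X$, $a\in\mathbb Z_{\geq0}^m$; and if $a\neq0$ then $v=\tfrac12 x+\tfrac12(x+2a)$ exhibits $v$ as a nontrivial convex combination of the distinct points $x,x+2a\in\mathcal N(X)$, contradicting extremality, so $v=x\in X$.

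\emph{Step 2: $\Vert X=\operatorname{ext}(\mathcal N(X))$.} For $v\in\operatorname{ext}(\mathcal N(X))$ (hence $v\in X$ by Step 1): if $v\in\mathcal N(X\setminus\{v\})$, then $v$ is a finite convex combination of points of $(X\setminus\{v\})+\mathbb Z_{\geq0}^m\subseteq\mathcal N(X)$, and extremality forces one such point to equal $v$, i.e.\ $v=y+a$ with $y\in X\setminus\{v\}$ and necessarily $a\in\mathbb Z_{\geq0}^m\setminus\{0\}$; the same midpoint argument as in Step 1 contradicts extremality. Hence $v\notin\mathcal N(X\setminus\{v\})$, that is $v\in\Vert X$. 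Conversely, if $x\in X\setminus\operatorname{ext}(\mathcal N(X))$, then by Step 1 we have $\operatorname{ext}(\mathcal N(X))\subseteq X\setminus\{x\}$ and $x\in\mathcal N(X)=\operatorname{conv}(\operatorname{ext}(\mathcal N(X)))+\mathbb R_{\geq0}^m\subseteq\mathcal N(X\setminus\{x\})$, so $x\notin\Vert X$. Both inclusions together with Step 1 give the lemma.

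\emph{Main obstacle.} The only substantive point is Step 2, namely reconciling the combinatorial notion of vertex ($x\notin\mathcal N(X\setminus\{x\})$) with the genuine polyhedral one; both directions rely on the structural facts of Step 1 — that $\mathcal N(X)$ is a pointed polyhedron generated by its finitely many extreme points, all of which belong to $X$ — together with the elementary observation that $x+a$ is never an extreme point of $\mathcal N(X)$ when $x\in X$ and $0\neq a\in\mathbb Z_{\geq0}^m$.
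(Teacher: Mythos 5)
Your proof is correct, but it follows a genuinely different route from the paper. You prove the stronger structural statement $\Vert X=\operatorname{ext}(\mathcal{N}(X))$: via Dickson's lemma you realize $\mathcal{N}(X)$ as a pointed polyhedron with recession cone $\mathbb{R}_{\geq0}^m$, invoke the Minkowski--Weyl decomposition to get $\mathcal{N}(X)=\operatorname{conv}(\operatorname{ext}(\mathcal{N}(X)))+\mathbb{R}_{\geq0}^m$ with finitely many extreme points all lying in $X$, and then match the combinatorial condition $x\notin\mathcal{N}(X\setminus\{x\})$ with genuine extremality (the midpoint trick $x+a=\tfrac12 x+\tfrac12(x+2a)$ doing the work in both directions). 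Since $\operatorname{ext}(\mathcal{N}(X))$ manifestly depends only on $\mathcal{N}(X)$, the lemma drops out, and as a bonus your identification immediately gives the paper's next lemma $\mathcal{N}(\Vert X)=\mathcal{N}(X)$ and hence \cref{equal_Vert_is_equal_N} with no extra induction. The paper instead argues directly and elementarily: it takes $s\in\Vert S$ with $s\in\mathcal{N}(T\setminus\{s\})$, expands each $t_i\in T$ back as a convex combination over $S+\mathbb{Z}_{\geq0}^m$ (using $\mathcal{N}(S)=\mathcal{N}(T)$), isolates the total coefficient $c$ of $s$ itself, and gets a contradiction in both cases $c<1$ (solving for $s$ shows $s\in\mathcal{N}(S\setminus\{s\})$) and $c=1$ (forcing $t_i=s$, contradicting $t_i\in T\setminus\{s\}$); no finiteness reduction or polyhedral theory is needed. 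The trade-off: your argument leans on standard facts about pointed polyhedra (existence and finiteness of extreme points, Minkowski--Weyl, and implicitly $\mathcal{N}(E)=\operatorname{conv}(E)+\mathbb{R}_{\geq0}^m$ in the converse inclusion) as black boxes, but buys a cleaner conceptual picture and reusable structure, whereas the paper's computation is self-contained at the cost of being a one-off manipulation.
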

\begin{proof}

Let $s \in \Vert S$ and we assume that $s \in \mathcal{N}(T \setminus\{s\})$. Then there are $t_i \in T \setminus \{s\}$, $w_i \in \mathbb{Z}_{\geq 0}^m$ and positive $\lambda_i \in \mathbb{R}$ adding up to 1 such that 
\[ s = \sum_{i} \lambda_i (t_i + w_i).\]
Since $t_i \in \mathcal{N}(S)$, we can write the $t_i$ as
\[ t_i = \sum_{j}\mu_{i,j} (s_{i,j} + z_{i,j}),\]
where $s_{i,j} \in S$, $z_{i,j} \in \mathbb{Z}_{\ge 0}^m$ and $\mu_{i,j} \in \mathbb{R}$ are positive and adding up to 1. Thus, 
\[ s = \sum_{i,j} \lambda_{i}\mu_{i,j} (s_{i,j} + z_{i,j} + w_i) = \sum_{i,j}\lambda_i \mu_{i,j} s_{i,j} + v,\]
where $v$ is a vector with non-negative coefficients. By excluding in the sum those summands $s_{i,j}$ which are equal to $s$, we obtain
\[ s = cs + \sum_{\substack{i,j\\s_{i,j} \neq s}} \lambda_i\mu_{i,j}s_{i,j} + v\]
where $c = \sum_{i,j: s_{i,j = s}} \lambda_i \mu_{i,j} \in [0,1]$. If $c < 1$ we can solve the equation above for $s$ to get 
\[ s = \sum_{\substack{i,j\\s_{i,j} \neq s}} \frac{\lambda_i\mu_{i,j}}{1 - c} s_{i,j} + \frac{v}{1-c}.\]
The coefficients for the $s_{i,j}$ are positive and sum to 1, so the summation in the right hand side gives an element of $\mathcal{N}(S \setminus\{s\})$. Since $\mathcal{N}(S\setminus\{s\})$ is closed under adding elements of $\mathbb{R}_{\geq 0}^m$, and the coefficients of $v/(1-c)$ are non-negative, we then find that $s \in \mathcal{N}(S \setminus\{s\})$ in contradicting to the assumption that $s$ is a vertex of $S$. If $c = 1$, then all $s_{i,j}$ are equal to $s$ and we get $s = s + v$. Therefore, $v = 0$ and $t_i = s$ for each $i$, and in particular $s \in T \setminus \{s\}$, which is a contradiction. So we conclude that $s \notin \mathcal{N}(T \setminus \{s\})$ and $s$ is a vertex of $T$.
\end{proof}

\begin{lemma}
Let $X \in \mathcal{P}(\mathbb{Z}_{\geq0}^m)$. Then $\mathcal{N}(\Vert X) = \mathcal{N}(X)$. 
\end{lemma}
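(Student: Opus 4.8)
Since $\Vert X\subseteq X$ and $\mathcal N$ is monotone, the inclusion $\mathcal N(\Vert X)\subseteq\mathcal N(X)$ is clear, so the real content is $\mathcal N(X)\subseteq\mathcal N(\Vert X)$. My plan is to turn this into a statement about the vertices of a polyhedron. By Dickson's lemma the up-closed set $X+\mathbb Z_{\ge0}^m$ has only finitely many minimal elements $F=\{f_1,\dots,f_k\}$; these lie in $X$ and satisfy $X+\mathbb Z_{\ge0}^m=F+\mathbb Z_{\ge0}^m$, so $\mathcal N(X)=\operatorname{conv}(F)+\mathbb R_{\ge0}^m$ is a line-free polyhedron with recession cone exactly $\mathbb R_{\ge0}^m$. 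On the other side, from $\operatorname{conv}(A+B)=\operatorname{conv}(A)+\operatorname{conv}(B)$ and the elementary identity $\operatorname{conv}(\mathbb Z_{\ge0}^m)=\mathbb R_{\ge0}^m$ one gets $\mathcal N(\Vert X)=\operatorname{conv}(\Vert X)+\mathbb R_{\ge0}^m$. By the Minkowski--Weyl description of a line-free polyhedron, $\mathcal N(X)=\operatorname{conv}(V)+\mathbb R_{\ge0}^m$, where $V$ is the finite set of vertices of $\mathcal N(X)$; hence it suffices to prove $V\subseteq\Vert X$, because then $\mathcal N(X)=\operatorname{conv}(V)+\mathbb R_{\ge0}^m\subseteq\operatorname{conv}(\Vert X)+\mathbb R_{\ge0}^m=\mathcal N(\Vert X)$.

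To show $V\subseteq\Vert X$, fix a vertex $v$ of $\mathcal N(X)$ and choose a linear functional $\ell$ that attains its minimum over $\mathcal N(X)$ uniquely at $v$. Since $\mathcal N(X)$ contains $f_1+\mathbb R_{\ge0}^m$ and $\ell$ is bounded below on it, all coefficients of $\ell$ are non-negative. The minimum of $\ell$ over $\operatorname{conv}(F)$ is attained at some $f_i\in F\subseteq X$, and adding a vector of $\mathbb R_{\ge0}^m$ cannot decrease $\ell$, so $f_i$ minimizes $\ell$ over all of $\mathcal N(X)$ as well; uniqueness then gives $v=f_i\in X$. Now suppose, for contradiction, that $v\notin\Vert X$, i.e.\ $v\in\mathcal N(X\setminus\{v\})$, so that $v=\sum_i\lambda_i(x_i+w_i)$ with $x_i\in X\setminus\{v\}$, $w_i\in\mathbb Z_{\ge0}^m$, $\lambda_i>0$ and $\sum_i\lambda_i=1$. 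Applying $\ell$ and using $\ell(x_i)\ge\ell(v)$ (as $x_i\in X\subseteq\mathcal N(X)$) together with $\ell(w_i)\ge0$ forces $\ell(x_i)=\ell(v)$ and $\ell(w_i)=0$ for all $i$; but then each $x_i$ is a minimizer of $\ell$ over $\mathcal N(X)$, so $x_i=v$, contradicting $x_i\ne v$. Hence $v\in\Vert X$, and the lemma follows.

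The argument is short once the framework is set up, so I expect the only real work to be the preliminary reductions: invoking Dickson's lemma to ensure $\mathcal N(X)$ is genuinely a polyhedron (this is how the possible infiniteness of $X$ is absorbed), observing that the supporting functional of a polyhedral vertex can be chosen with non-negative coefficients because $\mathbb R_{\ge0}^m$ sits inside the recession cone, and citing the Minkowski--Weyl theorem in the form ``a line-free polyhedron is the convex hull of its vertices plus its recession cone''. The conceptual heart---that a polyhedral vertex of $\mathcal N(X)$ automatically satisfies the combinatorial condition $v\notin\mathcal N(X\setminus\{v\})$---is exactly the evaluation computation above, which is a streamlined version of the mechanism already used in the proof of Lemma~\ref{N_equal_impies_Vert_equal}.
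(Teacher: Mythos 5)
Your proof is correct, but it takes a genuinely different route from the paper's. The paper also starts with Dickson's lemma to replace $X$ by a finite set, but then argues by induction on the cardinality, deleting one non-vertex at a time and invoking \cref{N_equal_impies_Vert_equal} at each step, so it never leaves the paper's own elementary combinatorial framework. You instead package $\mathcal{N}(X)$ as a pointed polyhedron $\operatorname{conv}(F)+\mathbb{R}_{\ge 0}^m$ (with $F$ the finite set of minimal elements of $X+\mathbb{Z}_{\ge 0}^m$), invoke the Minkowski--Weyl decomposition $\mathcal{N}(X)=\operatorname{conv}(V)+\mathbb{R}_{\ge 0}^m$ into polyhedral vertices plus recession cone, and reduce the lemma to the inclusion $V\subseteq \Vert X$, which you verify by a supporting-functional argument: every polyhedral vertex is exposed, the exposing functional has non-negative coefficients since $\mathbb{R}_{\ge 0}^m$ sits in the recession cone, and uniqueness of the minimizer rules out $v\in\mathcal{N}(X\setminus\{v\})$. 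All steps check out; the assertions you leave implicit are routine (minimal elements of $X+\mathbb{Z}_{\ge 0}^m$ lie in $X$, $\operatorname{conv}(\mathbb{Z}_{\ge 0}^m)=\mathbb{R}_{\ge 0}^m$, exposedness of vertices of a polyhedron). What your route buys is independence from \cref{N_equal_impies_Vert_equal} and from the induction, together with the conceptually clarifying byproduct that every polyhedral vertex of $\mathcal{N}(X)$ belongs to $\Vert X$, so the combinatorial notion of vertex used in the paper really does capture the vertices of the Newton polyhedron; what it costs is reliance on external convex-geometry machinery (Minkowski--Weyl, exposed points of polyhedra), whereas the paper's induction is self-contained, resting only on Dickson's lemma and its own preceding lemma.
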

\begin{proof}
By Dickson's lemma~\cite[chap. 2, Thm 5]{CLS92}, there is a finite subset $S \subseteq X$ with $X \subseteq S + \mathbb{Z}_{\geq 0}^m$. For such $S$, it holds that $\mathcal{N}(X) = \mathcal{N}(S)$ and by \cref{N_equal_impies_Vert_equal}, we get $\Vert X = \Vert S$. Therefore, replacing $X$ by $S$, we may assume that $X$ is finite. 

\medskip
We proceed by induction on $\#X$. Indeed, if $X = \emptyset$, the statement is obvious. Let $X$ be an arbitrary finite set. If every element of $X$ is a vertex of $X$, then $\mathcal{N}(X) = \mathcal{N}(\Vert X)$ is trivially true. Else, take $x \in X \setminus \Vert X$ and let $Y = X \setminus\{x\}$. Then $\mathcal{N}(X) = \mathcal{N}(Y)$ by definition, so applying \cref{N_equal_impies_Vert_equal} again we obtain $\Vert X = \Vert Y$. Since $\# Y < \#X$, we may apply the induction hypothesis to $Y$, and get that $\mathcal{N}(X) = \mathcal{N}(Y) = \mathcal{N}(\Vert Y) = \mathcal{N}(\Vert X)$.
\end{proof}

\begin{corollary}\label{equal_Vert_is_equal_N}
For $X, Y \in \mathcal{P}(\mathbb{Z}_{\geq0}^m)$ we have $\Vert X = \Vert Y$ if and only if $\mathcal{N}(X) = \mathcal{N}(Y)$. 
\end{corollary}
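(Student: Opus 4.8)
The plan is to derive \cref{equal_Vert_is_equal_N} as a formal consequence of the two preceding lemmas, so essentially no new geometry is needed. The statement is a biconditional, so I would organize the argument into the two implications.

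For the direction ``$\mathcal{N}(X) = \mathcal{N}(Y) \implies \Vert X = \Vert Y$'', I would simply invoke \cref{N_equal_impies_Vert_equal} with $S = X$ and $T = Y$; this is immediate and requires no further work.

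For the converse direction ``$\Vert X = \Vert Y \implies \mathcal{N}(X) = \mathcal{N}(Y)$'', I would chain together the second lemma applied to each of $X$ and $Y$: it gives $\mathcal{N}(X) = \mathcal{N}(\Vert X)$ and $\mathcal{N}(Y) = \mathcal{N}(\Vert Y)$. Since by hypothesis $\Vert X = \Vert Y$ as sets, we have $\mathcal{N}(\Vert X) = \mathcal{N}(\Vert Y)$ trivially, and hence $\mathcal{N}(X) = \mathcal{N}(\Vert X) = \mathcal{N}(\Vert Y) = \mathcal{N}(Y)$.

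There is no real obstacle here — the corollary is bookkeeping that packages \cref{N_equal_impies_Vert_equal} together with the ``Newton polygon of the vertex set'' lemma into a clean equivalence. If anything, the only point to be slightly careful about is making sure the two lemmas are quoted in the right logical direction (the first goes from equality of Newton polygons to equality of vertex sets, the second is what supplies the reverse passage via $\mathcal{N}(X)=\mathcal{N}(\Vert X)$); once that is set up, the proof is two lines.

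\begin{proof}
If $\mathcal{N}(X) = \mathcal{N}(Y)$, then $\Vert X = \Vert Y$ by \cref{N_equal_impies_Vert_equal}. Conversely, if $\Vert X = \Vert Y$, then applying the previous lemma to both $X$ and $Y$ gives $\mathcal{N}(X) = \mathcal{N}(\Vert X) = \mathcal{N}(\Vert Y) = \mathcal{N}(Y)$.
\end{proof}
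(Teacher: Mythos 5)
Your proof is correct and is exactly the intended argument: the paper states this as an immediate corollary of \cref{N_equal_impies_Vert_equal} and the lemma $\mathcal{N}(\Vert X)=\mathcal{N}(X)$, which is precisely how you combine them.
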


\begin{lemma}
\label{lemma_Properties_vert}
For $X,Y\in\mathcal{P}(\mathbb{Z}_{\geq0}^m)$, we have
\[\begin{array}{ccl}
\Vert(\Vert(X)\cup\Vert(Y)) & = &  \Vert(\Vert(X)\cup Y) \\
     & =& \Vert(X\cup\Vert(Y)) \\
     &=& \Vert(X\cup Y)
\end{array}\] 
and 
\[\begin{array}{ccl}
\Vert(\Vert(X)+\Vert(Y)) & = & \Vert(\Vert(X)+ Y) \\
     & =& \Vert(X+\Vert(Y)) \\
     &=& \Vert(X+ Y).
\end{array}\]
\end{lemma}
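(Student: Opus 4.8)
The plan is to reduce every claimed identity to an equality of Newton polygons by means of \cref{equal_Vert_is_equal_N}, which says that $\Vert A = \Vert B$ holds precisely when $\mathcal{N}(A) = \mathcal{N}(B)$. Thus it suffices to establish, for all $X,Y \in \mathcal{P}(\mathbb{Z}_{\geq 0}^m)$, the chain of equalities
\[
\mathcal{N}(\Vert(X)\cup\Vert(Y)) = \mathcal{N}(\Vert(X)\cup Y) = \mathcal{N}(X\cup\Vert(Y)) = \mathcal{N}(X\cup Y),
\]
together with the analogous chain in which $\cup$ is replaced by the Minkowski sum $+$.

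For this I would first record two elementary facts about the operator $\mathcal{N}$. Writing $\operatorname{conv}$ for the convex hull in $\mathbb{R}^m$, the first is $\mathcal{N}(A\cup B) = \operatorname{conv}(\mathcal{N}(A)\cup\mathcal{N}(B))$, which follows from $(A\cup B) + \mathbb{Z}_{\geq 0}^m = (A + \mathbb{Z}_{\geq 0}^m)\cup(B + \mathbb{Z}_{\geq 0}^m)$ together with the general identity $\operatorname{conv}(S\cup T) = \operatorname{conv}(\operatorname{conv}(S)\cup\operatorname{conv}(T))$. The second is $\mathcal{N}(A + B) = \mathcal{N}(A) + \mathcal{N}(B)$, which follows from the general identity $\operatorname{conv}(S) + \operatorname{conv}(T) = \operatorname{conv}(S + T)$ applied to $S = A + \mathbb{Z}_{\geq 0}^m$ and $T = B + \mathbb{Z}_{\geq 0}^m$, using $\mathbb{Z}_{\geq 0}^m + \mathbb{Z}_{\geq 0}^m = \mathbb{Z}_{\geq 0}^m$. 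Both are standard convex geometry; I expect the only mild care needed is that the sets involved may be infinite, so one argues with convex hulls directly rather than through vertex descriptions.

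With these in hand, the preceding lemma — which gives $\mathcal{N}(\Vert Z) = \mathcal{N}(Z)$ for every $Z$ — finishes everything mechanically. For the union row, $\mathcal{N}(\Vert(X)\cup\Vert(Y)) = \operatorname{conv}(\mathcal{N}(\Vert X)\cup\mathcal{N}(\Vert Y)) = \operatorname{conv}(\mathcal{N}(X)\cup\mathcal{N}(Y)) = \mathcal{N}(X\cup Y)$, and the same computation with only one of the two $\Vert$'s removed yields the two intermediate terms. For the Minkowski row, $\mathcal{N}(\Vert(X) + \Vert(Y)) = \mathcal{N}(\Vert X) + \mathcal{N}(\Vert Y) = \mathcal{N}(X) + \mathcal{N}(Y) = \mathcal{N}(X + Y)$, and again dropping one $\Vert$ changes nothing. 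Applying \cref{equal_Vert_is_equal_N} to each of these equalities of Newton polygons delivers all six claimed identities.

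The main obstacle is essentially bookkeeping: isolating the two convex-geometry facts about $\mathcal{N}$ of a union and of a Minkowski sum and verifying that they hold for arbitrary, possibly infinite, subsets of $\mathbb{Z}_{\geq 0}^m$. Once these are stated cleanly, the proof collapses to a repeated application of the preceding lemma and \cref{equal_Vert_is_equal_N}.
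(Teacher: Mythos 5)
Your proposal is correct and follows essentially the same route as the paper: both arguments reduce all six identities to equalities of Newton polygons of the four sets and then conclude via \cref{equal_Vert_is_equal_N}, using $\mathcal{N}(\Vert Z)=\mathcal{N}(Z)$ and the same elementary convex-combination computations. The only difference is organizational: you package these computations as the general identities $\mathcal{N}(A\cup B)=\operatorname{conv}(\mathcal{N}(A)\cup\mathcal{N}(B))$ and $\mathcal{N}(A+B)=\mathcal{N}(A)+\mathcal{N}(B)$, whereas the paper exploits the inclusion diagram among $\Vert(X)*\Vert(Y)$, $\Vert(X)*Y$, $X*\Vert(Y)$, $X*Y$ and only verifies the single inclusion $X*Y\subseteq\mathcal{N}(\Vert(X)*\Vert(Y))$.
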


\begin{proof}
Let $*$ be either $\cup$ or $+$. We have the following  diagram of inclusions
\begin{equation*}
\xymatrix{
&\Vert(X)* Y\ar[dr]&\\
\Vert(X)*\Vert(Y)\ar[dr]\ar[ur]\ar[rr]&&X*Y\\
&X* \Vert(Y)\ar[ur]&\\
}
\end{equation*}
We show that these four sets generate the same Newton polygon. For this, it is enough to show that $X*Y\subseteq \mathcal{N}(\Vert(X)*\Vert(Y))$.

\medskip
For $* = \cup$, we have $X \subseteq \mathcal{N}(\Vert X) \subseteq \mathcal{N}(\Vert(X) \cup \Vert(Y))$ and similarly $Y \subseteq \mathcal{N}(\Vert(X) \cup \Vert(Y))$. Hence, $X \cup Y \subseteq \mathcal{N}(\Vert(X) \cup \Vert(Y))$.

\medskip
Now suppose that $*=+$. Let $t\in X+Y$, and write $t = x + y$ with $x \in X$ and $y \in Y$. Using the inclusions $X \subseteq \mathcal{N}(\Vert X)$ and $Y \subseteq \mathcal{N}(\Vert Y)$, there are $x_{i}\in\Vert(X)$, $y_{j}\in \Vert(Y)$, $u_{i}, v_{j}\in \mathbb{Z}_{\geq0}^m$ and $\alpha_{i},\beta_{j} \in \mathbb{R}_{\ge 0}$ satisfying $\sum_{i}\alpha_{i} = 1$ and $\sum_{j}\beta_{j} = 1$ such that
\[t=\sum_i \alpha_{i}(x_{i} + u_{i})+\sum_j \beta_{j}(y_{j} + v_{j}).\] Rewriting this gives
\[t = \sum_{i,j} \alpha_{i}\beta_{j}(x_{i} + y_{j} + u_{i} + v_{j}).\]
For each pair $i,j$, the expression between parentheses is an element of $\Vert(X) + \Vert(Y) + \mathbb{Z}_{\geq 0}$ and the coefficients are non-negative and sum up to 1. This shows that $t \in \mathcal{N}(\Vert(X) + \Vert(Y))$, which ends the proof of the inclusions.
\end{proof}

\begin{example}
\label{Example_VertSet}
An element $X\in \mathcal{P}(\mathbb{Z}_{\geq0}^m)$ generates a monomial ideal which contains a unique minimal basis $B(X)$ (see e.g.~\cite{CLS92}). In general,   $\Vert(X)\subset B(X)$ and this inclusion may be strict. Consider the set $X=\{A_1=(1,4),A_2=(2,3),A_3=(3,3),A_4=(4,1)\} \subseteq \mathbb{Z}_{\geq 0}^2$. The Newton polygon $\mathcal{N}(X)$ can be visualized as in Figure \ref{Figure_NP} and $\mathrm{Vert}(X) = \{A_1, A_4\}$ which is a strict subset of $B(X)=\{A_1,A_2,A_4\}.$

\begin{figure}[!htb]
    \centering
    \includegraphics{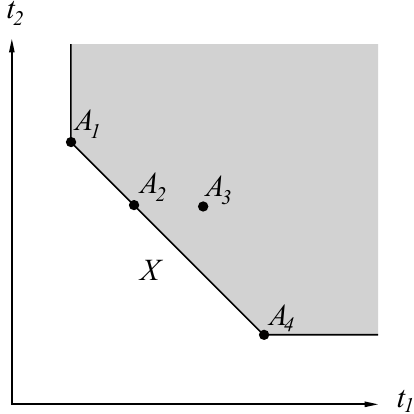}
    \caption{The Newton polygon of $X$. The vertex set of $X$ is $\{A_1, A_4\}$.}
\label{Figure_NP}
\end{figure}
 
 
\end{example}

We deduce from \cref{equal_Vert_is_equal_N} that the map $\mathrm{Vert}\colon\mathcal{P}(\mathbb{Z}^m_{\geq0})\longrightarrow \mathcal{P}(\mathbb{Z}^m_{\geq0})$ is a projection operator in the sense that $\mathrm{Vert}^2=\mathrm{Vert}$. 
\begin{definition}
We denote by $\mathbb{T}[[t_1,\ldots,t_m]]$ the image of the operator $\mathrm{Vert}$, and call its elements either \textbf{vertex sets} or \textbf{tropical formal power series}. For $S,T \in \mathbb{T}[[t_1,\ldots,t_m]]$, we define \[S\oplus T=\mathrm{Vert}(S\cup T) \quad \textrm{ and }\quad \ S\odot T=\mathrm{Vert}(S+T).\]
\end{definition}

\begin{corollary} \label{lemma:commutative semiring}
The set $(\mathbb{T}[[t_1,\ldots,t_m]],\oplus,\odot)$ is a commutative idempotent semiring, with the zero element $\emptyset$ and the unit element $\{(0,\ldots,0)\}$. 
\end{corollary}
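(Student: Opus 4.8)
The plan is to verify directly that $(\mathbb{T}[[t_1,\ldots,t_m]],\oplus,\odot)$ inherits its semiring structure from $\mathcal{P}(\mathbb{Z}_{\geq0}^m)$ via the projection $\Vert$. The key observation, which does all the work, is \cref{lemma_Properties_vert}: it says precisely that $\Vert$ is compatible with $\cup$ and $+$ in the sense that $\Vert(\Vert(X)*\Vert(Y)) = \Vert(X*Y)$ for $*\in\{\cup,+\}$. In the language of semirings, this is the statement that $\Vert$ is an \emph{idempotent semiring retraction}: since $\mathbb{T}[[t_1,\ldots,t_m]]$ is by definition the image of $\Vert$, and $\Vert$ restricted to its image is the identity (by $\Vert^2=\Vert$, noted just above the definition), the operations $\oplus$ and $\odot$ are exactly the push-forwards of $\cup$ and $+$ along $\Vert$.

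Concretely, I would proceed as follows. First, note that $\emptyset$ and $\{(0,\ldots,0)\}$ are vertex sets (each has no non-vertex elements), so they lie in $\mathbb{T}[[t_1,\ldots,t_m]]$. Next, for associativity of $\oplus$: given $S,T,U\in\mathbb{T}[[t_1,\ldots,t_m]]$ one computes $(S\oplus T)\oplus U = \Vert(\Vert(S\cup T)\cup U) = \Vert((S\cup T)\cup U)$ using \cref{lemma_Properties_vert}, and similarly $S\oplus(T\oplus U) = \Vert(S\cup\Vert(T\cup U)) = \Vert(S\cup(T\cup U))$; these agree because $\cup$ is associative on $\mathcal{P}(\mathbb{Z}_{\geq0}^m)$. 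Commutativity of $\oplus$ and the fact that $\emptyset\oplus S = \Vert(\emptyset\cup S) = \Vert(S) = S$ (the last equality since $S$ is already a vertex set) are immediate. The identical argument with $+$ in place of $\cup$ and $\{(0,\ldots,0)\}$ in place of $\emptyset$ gives that $\odot$ is commutative, associative, and has unit $\{(0,\ldots,0)\}$. Idempotency $S\oplus S = \Vert(S\cup S) = \Vert(S) = S$ is trivial. For distributivity, $S\odot(T\oplus U) = \Vert(S + \Vert(T\cup U)) = \Vert(S + (T\cup U)) = \Vert((S+T)\cup(S+U))$, where the middle step is \cref{lemma_Properties_vert} and the last uses that Minkowski sum distributes over union in $\mathcal{P}(\mathbb{Z}_{\geq0}^m)$; on the other side, $(S\odot T)\oplus(S\odot U) = \Vert(\Vert(S+T)\cup\Vert(S+U)) = \Vert((S+T)\cup(S+U))$ again by \cref{lemma_Properties_vert}. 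Finally $\emptyset\odot S = \Vert(\emptyset + S) = \Vert(\emptyset) = \emptyset$.

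There is no serious obstacle here; the only thing to be careful about is to invoke \cref{lemma_Properties_vert} in exactly the right form at each step (peeling off an inner $\Vert$) and to remember that $\Vert$ acts as the identity on elements of $\mathbb{T}[[t_1,\ldots,t_m]]$, which is what lets the unit and idempotency computations close. One could also phrase the whole proof abstractly: the general fact that if $f\colon S\to S$ is a retraction of an (idempotent) semiring onto a subset $T$ such that $f(a*b)=f(f(a)*f(b))$ for each operation $*$, then $T$ with operations $a\oplus b := f(a*b)$ is an (idempotent) semiring and $f$ is a morphism. Invoking this lemma with $S=\mathcal{P}(\mathbb{Z}_{\geq0}^m)$ and $f=\Vert$ (whose hypotheses are \cref{equal_Vert_is_equal_N} plus \cref{lemma_Properties_vert}) immediately yields the corollary, and also records for later use that $\Vert\colon\mathcal{P}(\mathbb{Z}_{\geq0}^m)\to\mathbb{T}[[t_1,\ldots,t_m]]$ is a semiring homomorphism, as promised in the section introduction. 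I would likely include this abstract lemma first and then deduce the corollary in one line.
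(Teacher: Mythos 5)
Your proof is correct and follows essentially the same route as the paper: reduce every axiom to the corresponding identity in $\mathcal{P}(\mathbb{Z}_{\geq0}^m)$ by using \cref{lemma_Properties_vert} to peel off the inner $\Vert$, the only substantive checks being associativity of $\oplus$ and $\odot$ and distributivity. Your extra touches (explicitly verifying units, commutativity and idempotency, and the abstract retraction reformulation) are fine but add nothing beyond the paper's argument.
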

\begin{proof}
The only things to check are associativity of $\oplus$, associativity of $\odot$ and the distributive property. The associativity of $\oplus$ and $\odot$ follows from the equalities 
\[S\oplus(T\oplus U)=\mathrm{Vert}(S\cup T\cup U)= (S\oplus T)\oplus U\]
and
\[S\odot(T\odot U)=\mathrm{Vert}(S+ T+ U)= (S\odot T)\odot U\] 
which are consequences of \cref{lemma_Properties_vert}. The distributivity follows from 
\[S\odot(T\oplus U)=\Vert((S+T)\cup U)=\Vert((S+T)\cup (S+U))=(S\odot T)\oplus (S\odot U).\qedhere\]
\end{proof}

\begin{corollary}
\label{Lemma_linearity}
The map $\Vert$ is a homomorphism of  semirings. In particular, for any finite family $\{X_i\}_i$ of elements $X_i\in\mathcal{P}(\mathbb{Z}_{\geq0}^m)$, we have $\Vert(\sum_iX_i) = \bigodot_{i\in I}\Vert(X_i)$, $\Vert(\bigcup_iX_i) = \bigoplus_{i\in I}\Vert(X_i)$ and $\Vert(nT)=\Vert(T)^{\odot n}$.
\end{corollary}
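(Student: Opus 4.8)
The plan is to deduce \cref{Lemma_linearity} almost entirely from the already-established structural facts, treating the ``homomorphism'' claim first and the three displayed identities as formal consequences of it. First I would check that $\Vert\colon\mathcal{P}(\mathbb{Z}_{\geq0}^m)\to\mathbb{T}[[t_1,\ldots,t_m]]$ sends the two distinguished elements correctly: $\Vert(\emptyset)=\emptyset$ and $\Vert(\{(0,\ldots,0)\})=\{(0,\ldots,0)\}$, both of which are immediate from the definition of a vertex (a single point is always a vertex of the singleton it generates, and the empty set has no vertices). So $\Vert$ preserves the additive and multiplicative units.

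Next I would verify compatibility with the two binary operations, i.e.\ $\Vert(X\cup Y)=\Vert(X)\oplus\Vert(Y)$ and $\Vert(X+Y)=\Vert(X)\odot\Vert(Y)$ for all $X,Y\in\mathcal{P}(\mathbb{Z}_{\geq0}^m)$. By definition $\Vert(X)\oplus\Vert(Y)=\Vert(\Vert(X)\cup\Vert(Y))$ and $\Vert(X)\odot\Vert(Y)=\Vert(\Vert(X)+\Vert(Y))$, so these two equalities are exactly the outer-versus-inner identities $\Vert(\Vert(X)\cup\Vert(Y))=\Vert(X\cup Y)$ and $\Vert(\Vert(X)+\Vert(Y))=\Vert(X+Y)$ already proven in \cref{lemma_Properties_vert}. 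Combined with the unit computations of the previous paragraph, this shows $\Vert$ induces morphisms $(\mathcal{P}(\mathbb{Z}_{\geq0}^m),\cup,\emptyset)\to(\mathbb{T}[[t]],\oplus,\emptyset)$ and $(\mathcal{P}(\mathbb{Z}_{\geq0}^m),+,\{0\})\to(\mathbb{T}[[t]],\odot,\{0\})$ of commutative monoids, which is precisely the definition of a semiring morphism given in \cref{sec:the tropical semi-ring}.

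Finally I would obtain the three ``in particular'' statements by a routine finite induction on $\#I$, or more slickly by observing that a semiring morphism automatically commutes with finite sums and finite products. Concretely, $\Vert(\bigcup_{i\in I}X_i)=\bigoplus_{i\in I}\Vert(X_i)$ and $\Vert(\sum_{i\in I}X_i)=\bigodot_{i\in I}\Vert(X_i)$ follow by induction from the two-variable identities, using associativity of $\oplus$ and $\odot$ from \cref{lemma:commutative semiring}; and $\Vert(nT)=\Vert(T)^{\odot n}$ is the special case $X_i=T$ for all $i$ in a set of size $n$ (the convention $0\cdot T=\{0\}=\Vert(\{0\})=\Vert(T)^{\odot 0}$ handles the base case). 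I do not anticipate a genuine obstacle here: all the mathematical content was discharged in \cref{lemma_Properties_vert} and \cref{equal_Vert_is_equal_N}, so the only care needed is bookkeeping — making sure the empty union/sum and the $n=0$ case are consistent with the stated conventions, and that ``morphism of semirings'' is read through the paper's definition as ``morphism on each underlying monoid.''
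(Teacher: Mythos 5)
Your proposal is correct and follows the same route as the paper, whose proof simply cites \cref{lemma_Properties_vert} and \cref{lemma:commutative semiring}; you have merely made explicit the unit checks, the reduction of $\oplus$ and $\odot$ compatibility to the outer-versus-inner identities, and the induction to finite families. Nothing is missing and no further comment is needed.
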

\begin{proof}
Follows directly from \cref{lemma_Properties_vert} and \cref{lemma:commutative semiring}.
\end{proof}

\section{The differential ring of power series and the support map}
\label{S_DRPS}
We consider the differential ring $R_m$ from Section~\ref{S_PDAG}, and the semirings $\mathcal{P}(\mathbb{Z}^m_{\geq0})$, $\mathbb{T}[[t_1,\ldots,t_m]]$ from Section~\ref{sec:the tropical semi-ring}. In this part we introduce the support and the tropicalization maps, which are related by the following commutative diagram \begin{equation*}
    \xymatrix{R_m\ar[r]^{\text{Supp}}\ar[dr]_{\text{trop}}&\mathcal{P}(\mathbb{Z}^m_{\geq0})\ar[d]^{\Vert}\\
    &\mathbb{T}[[t_1,\ldots,t_m]]\\}
\end{equation*}

If $J=(j_1, \ldots, j_m)$ is an element of $\mathbb{Z}^m_{\geq0}$, we will denote by $t^J$ the monomial $t_1^{j_1} \cdots t_m^{j_m}$.
An element of $R_m$ is of the form $\varphi = \sum_{J \in \mathbb{Z}^m_{\geq0}} a_J t^J$ with  $a_J \in K.$

\begin{definition}
 The \textbf{support} of $\varphi = \sum a_J t^J \in R_m$ is defined as 
 \[\mathrm{Supp}(\varphi) = \{J \in \mathbb{Z}^m_{\geq0} \ |\ a_J \neq 0\}.\]
For a fixed integer $n$, the map which sends $\varphi=(\varphi_1, \ldots, \varphi_n) \in R_m^n$ to $\mathrm{Supp}(\varphi)= (\mathrm{Supp}(\varphi_1), \ldots, \mathrm{Supp}(\varphi_n)) \in \mathcal{P}(\mathbb{Z}^m_{\geq0})^n$ will also be denoted by $\mathrm{Supp}$. The \textbf{set of supports} of a subset $T\subseteq R_m^n$ is its image under the map Supp:
\[\mathrm{Supp}(T) = \{\mathrm{Supp}(\varphi) \ | \ \varphi \in T\} \subseteq \mathcal{P}(\mathbb{Z}^m_{\geq0}) ^n \]
\end{definition}

\begin{definition}
The mapping that sends each series in $R_m$ to the vertex set of its support  is called the \textbf{tropicalization} map
\[\begin{array}{cccc}
\mathrm{trop} \colon &R_m & \to &\mathbb{T}[[t_1,\ldots,t_m]]  \\
& \varphi & \mapsto & \Vert(\mathrm{Supp}(\varphi))
\end{array}\]
\end{definition}


\begin{lemma}
\label{lem:trop_morphism_of_semi_ring}
The tropicalization map is a non-degenerate valuation in the sense of \cite[Definition 2.5.1]{Giansiracusa-Giansiracusa}. This is, it satisfies
\begin{enumerate}
    \item $\trop(0)=\emptyset$, $\trop(\pm1)=\{(0,\ldots,0)\}$,
    \item $\trop(\varphi\cdot\psi)=\trop(\varphi)\odot\trop(\psi)$,
    \item $\trop(\varphi + \psi) \oplus \trop(\varphi) \oplus \trop(\psi) = \trop(\varphi) \oplus \trop(\psi)$,
    \item $\trop(\varphi)=\emptyset$ implies that $\varphi=0$.
\end{enumerate}
\end{lemma}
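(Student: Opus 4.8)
The plan is to verify the four properties one at a time, reducing each to a statement about supports in $\mathcal{P}(\mathbb{Z}^m_{\geq0})$ together with the fact that $\Vert$ is a semiring homomorphism (\cref{Lemma_linearity}). First, property (1) is immediate: $0$ has empty support, and $\pm1$ has support $\{(0,\ldots,0)\}$ since $K$ has characteristic zero (so $\pm 1 \neq 0$), and $\Vert$ fixes both of these.

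For property (2), I would first establish the analogous identity at the level of supports, namely $\Supp(\varphi\cdot\psi) \subseteq \Supp(\varphi) + \Supp(\psi)$, with equality possibly failing due to cancellation. However, what we actually need is only that $\mathcal{N}(\Supp(\varphi\cdot\psi)) = \mathcal{N}(\Supp(\varphi)+\Supp(\psi))$, or equivalently (by \cref{equal_Vert_is_equal_N}) that $\Vert(\Supp(\varphi\cdot\psi)) = \Vert(\Supp(\varphi)+\Supp(\psi))$. The key point is that the vertices of the Newton polygon of a product cannot cancel: if $J_0$ is a vertex of $\mathcal{N}(\Supp(\varphi))$ and $K_0$ is a vertex of $\mathcal{N}(\Supp(\psi))$, then $J_0 + K_0$ occurs in $\varphi\cdot\psi$ with a nonzero coefficient, because it can be written as $J+K$ with $J \in \Supp(\varphi)$, $K \in \Supp(\psi)$ in only one way (by the extremality of $J_0$ and $K_0$, using that the field is a domain). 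Conversely every monomial of $\varphi\cdot\psi$ lies in $\Supp(\varphi)+\Supp(\psi) \subseteq \mathcal{N}(\Supp(\varphi)+\Supp(\psi))$. Using $\Vert(\Supp(\varphi)+\Supp(\psi)) = \Vert(\Supp(\varphi))\odot\Vert(\Supp(\psi))$ from \cref{Lemma_linearity}, this gives $\trop(\varphi\cdot\psi) = \trop(\varphi)\odot\trop(\psi)$. I expect this multiplicativity step, specifically the no-cancellation argument for vertices of the product, to be the main obstacle, since one must argue carefully that an extreme point of a Minkowski sum has a unique decomposition.

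For property (3), the observation is that $\Supp(\varphi+\psi) \subseteq \Supp(\varphi)\cup\Supp(\psi)$ always, again because of possible cancellation. Hence $\mathcal{N}(\Supp(\varphi+\psi)) \subseteq \mathcal{N}(\Supp(\varphi)\cup\Supp(\psi)) = \mathcal{N}(\Supp(\varphi)) \cup^{\text{hull}} \cdots$, and passing to vertex sets via \cref{lemma_Properties_vert} and \cref{Lemma_linearity} gives $\Vert(\Supp(\varphi+\psi)) \oplus \trop(\varphi) \oplus \trop(\psi) = \trop(\varphi)\oplus\trop(\psi)$; this is exactly the asserted identity, since $X \subseteq \mathcal{N}(Y)$ implies $\Vert(X \cup Y) = \Vert(Y)$. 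Finally, property (4) is immediate: $\trop(\varphi) = \Vert(\Supp(\varphi)) = \emptyset$ forces $\Supp(\varphi) = \emptyset$ (an operator $\Vert$ applied to a nonempty set of $\mathbb{Z}^m_{\geq 0}$ is nonempty, as any finite nonempty subset of $\mathbb{Z}^m_{\geq 0}$ has at least one vertex by Dickson's lemma), hence $\varphi = 0$. I would close by noting that these four properties are precisely the axioms of \cite[Definition 2.5.1]{Giansiracusa-Giansiracusa} for a non-degenerate valuation.
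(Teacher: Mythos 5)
Your handling of (1), (3) and (4) is correct and matches the paper, which proves (3) from $\Supp(\varphi+\psi)\subseteq\Supp(\varphi)\cup\Supp(\psi)$ together with \cref{Lemma_linearity}, and (4) from the fact that only the empty set has empty Newton polygon. The problem is the step you yourself single out as the crux of (2). It is not true that for \emph{every} vertex $J_0$ of $\Supp(\varphi)$ and \emph{every} vertex $K_0$ of $\Supp(\psi)$ the point $J_0+K_0$ has a unique decomposition as a sum of support points and hence survives in $\Supp(\varphi\cdot\psi)$. Take $m=2$, $\varphi=t_1+t_2$, $\psi=t_1-t_2$: the points $(1,0)$ and $(0,1)$ are vertices of both supports, yet $(1,0)+(0,1)=(0,1)+(1,0)=(1,1)$ has two decompositions, and indeed the coefficient of $t_1t_2$ in $\varphi\psi=t_1^2-t_2^2$ cancels. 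Extremality of the two summands separately does not give uniqueness; uniqueness only holds when $J_0+K_0$ is itself a vertex of the Minkowski sum.

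The repair is small and keeps your structure. It suffices to show that every $v\in\Vert(\Supp(\varphi)+\Supp(\psi))$ lies in $\Supp(\varphi\cdot\psi)$: then $\mathcal{N}(\Supp(\varphi)+\Supp(\psi))=\mathcal{N}\bigl(\Vert(\Supp(\varphi)+\Supp(\psi))\bigr)\subseteq\mathcal{N}(\Supp(\varphi\cdot\psi))$ using the lemma $\mathcal{N}(\Vert X)=\mathcal{N}(X)$, the reverse inclusion comes from $\Supp(\varphi\cdot\psi)\subseteq\Supp(\varphi)+\Supp(\psi)$, and \cref{equal_Vert_is_equal_N} plus \cref{Lemma_linearity} finish exactly as you intended. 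For such a $v$ the decomposition \emph{is} unique, and this follows directly from the paper's definition of vertex: if $v=J+K=J'+K'$ with $(J,K)\neq(J',K')$, $J,J'\in\Supp(\varphi)$, $K,K'\in\Supp(\psi)$, then $J+K'$ and $J'+K$ are distinct from $v$, lie in $(\Supp(\varphi)+\Supp(\psi))\setminus\{v\}$, and have $v$ as their midpoint, contradicting $v\notin\mathcal{N}\bigl((\Supp(\varphi)+\Supp(\psi))\setminus\{v\}\bigr)$. Hence the coefficient of $t^v$ in $\varphi\psi$ is a single product $a_Jb_K\neq0$ since $K$ is a domain. Note that the paper avoids all of this by invoking as well known the identity $\mathcal{N}(\Supp(\varphi\cdot\psi))=\mathcal{N}(\Supp(\varphi))+\mathcal{N}(\Supp(\psi))$, which is precisely the statement your no-cancellation argument is attempting to reprove; if you want to include a proof, it must be phrased for vertices of the sum, not for sums of vertices.
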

\begin{proof}
The first point is clear. For the second point, note that the Newton polygon has the well-known homomorphism-type property \[\mathcal{N}(\Supp(\varphi \cdot \psi))= \mathcal{N}(\Supp(\varphi))+\mathcal{N}(\Supp(\psi))=\mathcal{N}(\Supp(\varphi)+\Supp(\psi)).\]
Hence, the vertices of the left hand side coincide with the vertices of the right hand side. This gives $\trop(\varphi\cdot\psi) = \Vert(\mathcal{N}(\Supp(\varphi) + \Supp(\psi)))$. That this is equal to $\trop(\varphi)\odot\trop(\psi)$ follows from \cref{lemma_Properties_vert}. The third point follows from the observation that $\Supp(\varphi + \psi) \subseteq \Supp(\varphi) \cup \Supp(\psi)$ and \cref{Lemma_linearity}. The last point follows from the fact that the empty set is the only set with empty Newton polygon.
\end{proof}

\begin{definition}
For $J = (j_1, \ldots, j_m) \in \mathbb{Z}^m_{\geq 0}$, we define the \textbf{tropical derivative operator} $\Theta_{\trop}(J)\colon\mathcal{P}(\mathbb{Z}^m_{\geq0})\to\mathcal{P}(\mathbb{Z}^m_{\geq0})$ as
\[\Theta_{\trop}(J)T:=\left\{ (t_1-j_1, \ldots, t_m -j_m) \ \left| \ 
\begin{array}{cccc} (t_1, \ldots, t_m) \in T, \\
  t_i-j_i \ge 0 \text{ for all }i
 \end{array}
 \right. \right\}\]
\end{definition}

For example, if $T$ is the grey part in Figure~\ref{F_DS2} left and $J=(1,2)$, then informally $\Theta_{\trop}(J)T$ is a translation of $T$ by the vector $-J$ and then keeping only the non-negative part. 
It is represented by the grey part in Figure~\ref{F_DS2} right.

\begin{figure}[!htb]
    \centering
    \includegraphics{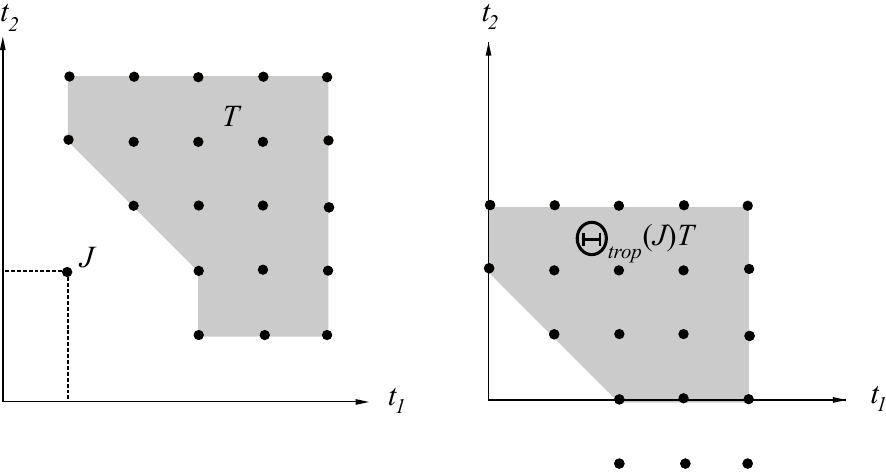}
    \caption{The operator $\Theta_{\trop}(J)$ for $J=(1,2)$ 
    applied to  $T$.}
   \label{F_DS2}
\end{figure}




\medskip
Since $K$ is of characteristic zero, for all $\varphi \in R_m$ and $J \in \mathbb{Z}^m_{\geq0}$, we have
\begin{equation}
\label{dif_comm}
\mathrm{Supp}(\Theta(J)\varphi) = \Theta_{\trop}(J)\mathrm{Supp}(\varphi)
\end{equation}

Consider a differential monomial $E_M$ as in \eqref{differential_monomial} and $S=(S_1,\ldots,S_n)\in\mathcal{P}(\mathbb{Z}_{\geq0}^m)^n$. We can now define the evaluation of  $E_M$ at $S$ as 
\begin{equation}
\label{expression_evaluation}
    E_M(S) = \sum_{\substack{1\leq i\leq n\\ ||J||_\infty\leq r}} M_{i,J}\Theta_{\trop}(J)S_i\in\mathcal{P}(\mathbb{Z}_{\geq0}^m).
\end{equation}

\begin{lemma}
\label{lem:monomial equalitymod}
Given $\varphi=(\varphi_1,\ldots,\varphi_n) \in R_m^n$ and a differential monomial $E_M$, we have
$\trop(E_M(\varphi))=\Vert(E_M(\Supp(\varphi)))$
\end{lemma}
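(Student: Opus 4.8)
The plan is to unwind the definition of $E_M(\varphi)$ and use the multiplicativity of $\trop$ established in \cref{lem:trop_morphism_of_semi_ring}. Recall that $E_M = \prod_{i,J} x_{i,J}^{M_{i,J}}$, so that $E_M(\varphi) = \prod_{i,J} (\Theta(J)\varphi_i)^{M_{i,J}} \in R_m$. Applying point (2) of \cref{lem:trop_morphism_of_semi_ring} repeatedly (i.e. over the finite index set of pairs $(i,J)$ with $\|J\|_\infty \le r$), we obtain
\[
\trop(E_M(\varphi)) = \bigodot_{i,J} \trop(\Theta(J)\varphi_i)^{\odot M_{i,J}}.
\]

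First I would rewrite each factor $\trop(\Theta(J)\varphi_i)$ using the definition $\trop = \Vert \circ \Supp$ together with the key commutation identity \eqref{dif_comm}, namely $\Supp(\Theta(J)\varphi_i) = \Theta_{\trop}(J)\Supp(\varphi_i)$, which holds because $K$ has characteristic zero. This gives $\trop(\Theta(J)\varphi_i) = \Vert(\Theta_{\trop}(J)\Supp(\varphi_i))$. Next I would feed this into the displayed product and invoke \cref{Lemma_linearity} (the fact that $\Vert$ is a semiring homomorphism, so $\Vert(nT) = \Vert(T)^{\odot n}$ and $\Vert(\sum_i X_i) = \bigodot_i \Vert(X_i)$) to pull all the $\Vert$'s outside: the right-hand side becomes
\[
\Vert\!\left( \sum_{i,J} M_{i,J}\, \Theta_{\trop}(J)\Supp(\varphi_i) \right).
\]
By the definition \eqref{expression_evaluation} of the evaluation of $E_M$ on a tuple of supports, the argument of $\Vert$ is exactly $E_M(\Supp(\varphi))$, which yields the claim $\trop(E_M(\varphi)) = \Vert(E_M(\Supp(\varphi)))$.

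I do not expect a serious obstacle here; the statement is essentially a bookkeeping exercise that chains together three facts proved earlier. The one point requiring a little care is making sure the inductive application of \cref{lem:trop_morphism_of_semi_ring}(2) and of \cref{Lemma_linearity} is legitimate, i.e. that the products and sums involved are genuinely finite — this is guaranteed by the constraint $\|J\|_\infty \le r$ built into the definition \eqref{differential_monomial} of a differential monomial, so only finitely many $M_{i,J}$ are in play. A degenerate case worth a sentence is $M = 0$, where $E_M = 1$: then $\trop(1) = \{(0,\ldots,0)\}$ by point (1) of \cref{lem:trop_morphism_of_semi_ring}, while $E_M(\Supp(\varphi)) = \{(0,\ldots,0)\}$ by the convention $0X = \{(0,\ldots,0)\}$, and $\Vert$ fixes this singleton, so both sides agree.
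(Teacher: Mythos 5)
Your proof is correct and follows essentially the same route as the paper: apply $\Vert$ to the identity $\Supp(\Theta(J)\varphi_i)=\Theta_{\trop}(J)\Supp(\varphi_i)$, use the multiplicativity of $\trop$ from \cref{lem:trop_morphism_of_semi_ring}, and collect everything with \cref{Lemma_linearity} to recognize $E_M(\Supp(\varphi))$ inside $\Vert$. The extra remarks on finiteness and the degenerate case $M=0$ are harmless additions not needed beyond what the paper already records.
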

\begin{proof}
By applying $\Vert$ to equation \eqref{dif_comm}, we have 
\begin{equation} \label{eq-help1}
\trop(\Theta(J)\varphi_i) = \Vert(\Theta_{\trop}(J)\mathrm{Supp}(\varphi_i)).
\end{equation}
Using the multiplicativity of trop, equation \eqref{eq-help1} and \cref{Lemma_linearity}, we obtain \begin{align*}
    \trop(E_M(\varphi))&=\bigodot_{i,J}\trop(\Theta(J)\varphi_i)^{\odot M_{i,J}}\\ &=\bigodot_{i,J}\Vert(\Theta_{\trop}(J)\mathrm{Supp}(\varphi_i))^{\odot M_{i,J}}\\ &=\Vert(E_M(\text{Supp}(\varphi))).\qedhere
\end{align*}
\end{proof}

\begin{remark}
If $P=\sum_M\alpha_ME_M \in R_{m,n}$ and $\varphi=(\varphi_1,\ldots,\varphi_n)\in R_m^n$, then we can consider the upper support $US(P,\varphi)$ of $P$ at $\varphi$ as $$US(P,\varphi)=\bigcup_M\left(\Supp(\alpha_M)+\Supp(E_M(\varphi)) \right)\in\mathcal{P}(\mathbb{Z}_{\geq0}^m).$$

We now compute the vertex set of $US(P,\varphi)$ by applying the operation $\Vert$ and \cref{Lemma_linearity} to the above expression to find
\begin{equation*}\begin{aligned} 
\Vert\Bigl(&US(P,\varphi) \Bigr)=\bigoplus_M\trop(\alpha_M)\odot\trop(E_M(\varphi))\\ &=\bigoplus_M\trop(\alpha_M)\odot\Vert(E_M(\text{Supp}(\varphi))),
\end{aligned} \end{equation*} 
since $\trop(E_M(\varphi))=\Vert(E_M(\Supp(\varphi)))$ by Lemma \ref{lem:monomial equalitymod}. This motivates the definition of tropical differential polynomials in the next section.
\end{remark}

\section{Tropical differential polynomials}
\label{S_TDP}
In this section we define the set of tropical differential polynomials $\mathbb{T}_{m,n}$ and the corresponding tropicalization morphism $\trop\colon R_{m,n}\to\mathbb{T}_{m,n}$. 
Let us remark that in the case of $m=1$ the definitions and properties presented here coincide with the corresponding ones in~\cite{AGT16}. Moreover, later in Section~\ref{Section_ExLim} we illustrate in \cref{example:NewtonPolygon} the reason for the particular definitions given here.

\begin{definition}
\label{def:definition ValS}
For a set $S \in \mathcal{P}(\mathbb{Z}^m_{\geq0})$ and a multi-index $J \in \mathbb{Z}^m_{\geq0}$ we define
\[
\mathrm{Val}_J(S) = \Vert(\Theta_{\trop}(J)S).
\]
\end{definition}
Note that for $\varphi \in R_m$ and any multi-index $J$ this means that
\[\Val_J(\Supp(\varphi)) = \trop(\Theta(J)\varphi).\]
In particular, $\mathrm{Val}_J(S) = \emptyset$ if and only if $\Theta(J)\varphi = 0$. It follows from \cref{Lemma_linearity} that \[\Vert(E_M(S))=\bigodot_{\substack{1\leq i\leq n\\ ||J||_\infty\leq r}} \mathrm{Val}_J(S_i)^{\odot M_{i,J}}.\]

\begin{definition}
A \textbf{tropical differential monomial} in the variables $x_1, \ldots, x_n$ of order less or equal to $r$ is an expression of the form 
 \[\epsilon_M=\bigodot_{\substack{1\leq i\leq n\\ ||J||_\infty\leq r}}x_{i,J}^{\odot M_{i,J}}\]
where $M=(M_{i,J}) \in (\mathbb{Z}_{\ge0})^{n\times(r+1)^m}$. 
\end{definition}

A tropical differential monomial $\epsilon_M$ induces an evaluation map from $\mathcal{P}(\mathbb{Z}^m_{\geq0})^n$ to $\mathbb{T}[[t_1,\ldots,t_m]]$ by
\[\epsilon_M(S_1,\ldots,S_n) =\Vert(E_M(S ))= \bigodot_{i,J} \mathrm{Val}_J(S_i)^{\odot M_{i,J}}\]
where $\mathrm{Val}_J(S_i)$ is given in \cref{def:definition ValS} and $E_M(S)$ as in \eqref{expression_evaluation}. Let us recall that, by \cref{lemma:commutative semiring}, we can also write
\[\epsilon_M(S_1, \ldots, S_n)= \mathrm{Vert}\biggl(\sum_{i,J}\mathrm{Val}_J(S_i)^{\odot M_{i,J}}
\biggr).\]

\begin{definition}
\label{def:tropical diff poly}
A \textbf{tropical differential polynomial} in the variables $x_1, \ldots, x_n$ of order less or equal to $r$ is an expression of the form 
\[p = p(x_1, \ldots, x_n) = \bigoplus_{M \in \Delta } a_M \odot \epsilon_M \]
where $a_M \in \mathbb{T}[[t_1,\ldots,t_m]], a_M \neq \emptyset$ and $\Delta$ is a finite subset of $(\mathbb{Z}_{\ge0})^{n\times(r+1)^m}$. We denote by $\mathbb{T}_{m,n}=\mathbb{T}[[t_1,\ldots,t_m]]\{x_1,\ldots,x_n\}$
the set of tropical differential polynomials.
\end{definition}

A tropical differential polynomial $p$ as in Definition \ref{def:tropical diff poly} induces a mapping from $\mathcal{P}(\mathbb{Z}^m_{\geq0})^n$ to $\mathbb{T}[[t_1,\ldots,t_m]]$ by
\[p(S) = \bigoplus_{M \in \Delta } a_M \odot \epsilon_M(S) = \mathrm{Vert}\Bigl( \bigcup_{M \in \Delta} (a_M + \epsilon_M(S))\Bigr) \]
The second equality follows again from \cref{lemma:commutative semiring}.
A differential polynomial $P \in R_{m,n}$ of order at most $r$ is of the form \[P= \sum_{M\in \Delta} \alpha_M E_M\]
where $\Delta$ is a finite subset of $(\mathbb{Z}_{\ge0})^{n\times(r+1)^m}$, $\alpha_M \in K[[t_1, \ldots, t_m]]$ and $E_M$ is a differential monomial as in \eqref{differential_monomial}.
Then the \textbf{tropicalization} of $P$ is defined as
\[\mathrm{trop}(P) = \bigoplus_{M \in \Delta }\trop(\alpha_M) \odot \epsilon_M\in \mathbb{T}_{m,n}\]
where $\epsilon_M$ is the tropical differential monomial corresponding to $E_M$. 

\begin{definition}
Let $G \subseteq R_{m,n}$ be a differential ideal. 
Its \textbf{tropicalization} $\mathrm{trop}(G)$ is the set of tropical differential polynomials $\{\mathrm{trop}(P) \ | \ P \in G\} \subseteq \mathbb{T}_{m,n}$. 
\end{definition}

\begin{lemma} \label{lem:monomial equality}
Given a differential monomial $E_M$ and $\varphi= (\varphi_1, \ldots, \varphi_n) \in K[[t_1, \ldots, t_m]]^n$, we have that
\[\trop(E_M(\varphi)) = \epsilon_M(\Supp(\varphi)). \]
\end{lemma}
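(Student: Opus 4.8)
The plan is to reduce this statement, \(\trop(E_M(\varphi)) = \epsilon_M(\Supp(\varphi))\), directly to \cref{lem:monomial equalitymod}, which already asserts \(\trop(E_M(\varphi)) = \Vert(E_M(\Supp(\varphi)))\). So the only thing that remains is to identify the right-hand side \(\epsilon_M(\Supp(\varphi))\) with \(\Vert(E_M(\Supp(\varphi)))\). But this identification is precisely the definition of the evaluation map \(\epsilon_M\): by \cref{def:definition ValS} and the displayed formula immediately following it, one has \(\epsilon_M(S_1,\ldots,S_n) = \Vert(E_M(S)) = \bigodot_{i,J}\Val_J(S_i)^{\odot M_{i,J}}\) for any \(S = (S_1,\ldots,S_n)\in\mathcal{P}(\mathbb{Z}^m_{\geq0})^n\). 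Taking \(S = \Supp(\varphi)\) gives \(\epsilon_M(\Supp(\varphi)) = \Vert(E_M(\Supp(\varphi)))\), and combining with \cref{lem:monomial equalitymod} closes the argument.

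First I would write: by \cref{lem:monomial equalitymod}, \(\trop(E_M(\varphi)) = \Vert(E_M(\Supp(\varphi)))\). Then I would recall that by definition of the evaluation of a tropical differential monomial (the display following \cref{def:definition ValS}, together with \cref{Lemma_linearity}), for any \(S\in\mathcal{P}(\mathbb{Z}^m_{\geq0})^n\) one has \(\epsilon_M(S) = \Vert(E_M(S))\). Applying this with \(S = \Supp(\varphi)\) yields \(\epsilon_M(\Supp(\varphi)) = \Vert(E_M(\Supp(\varphi))) = \trop(E_M(\varphi))\), as desired.

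Honestly, there is no real obstacle here: the lemma is essentially a restatement of \cref{lem:monomial equalitymod} phrased in the notation of tropical differential monomials introduced in this section, and its role in the paper is presumably to serve as the monomial-level building block for the tropicalization of a full polynomial \(P = \sum_M \alpha_M E_M\) (leading to a statement like \(\trop(P(\varphi)) \) relates to \(\trop(P)(\Supp(\varphi))\)). The one point worth being slightly careful about is that \(E_M(\varphi)\) is an element of \(R_m\) (a power series), so \(\trop\) on the left refers to the tropicalization map \(R_m \to \mathbb{T}[[t_1,\ldots,t_m]]\) from \cref{S_DRPS}, whereas \(\epsilon_M(\Supp(\varphi))\) is computed via the tropical derivative operators \(\Theta_{\trop}(J)\) and \(\odot\) on the semiring \(\mathbb{T}[[t_1,\ldots,t_m]]\); the compatibility of these two is exactly what \cref{lem:monomial equalitymod} (via \eqref{dif_comm}) provides. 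So the proof is a two-line citation.

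\begin{proof}
By \cref{lem:monomial equalitymod} we have $\trop(E_M(\varphi)) = \Vert(E_M(\Supp(\varphi)))$. On the other hand, by the definition of the evaluation map of a tropical differential monomial (the display following \cref{def:definition ValS}, together with \cref{Lemma_linearity}), for every $S \in \mathcal{P}(\mathbb{Z}^m_{\geq0})^n$ we have $\epsilon_M(S) = \Vert(E_M(S))$. Applying this to $S = \Supp(\varphi)$ gives
\[\epsilon_M(\Supp(\varphi)) = \Vert(E_M(\Supp(\varphi))) = \trop(E_M(\varphi)).\qedhere\]
\end{proof}
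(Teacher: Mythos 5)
Your proof is correct and matches the paper's own argument, which simply states that the lemma ``follows from notations and \cref{lem:monomial equalitymod}'' — i.e., exactly your combination of $\trop(E_M(\varphi)) = \Vert(E_M(\Supp(\varphi)))$ with the definitional identity $\epsilon_M(S) = \Vert(E_M(S))$. Your write-up just spells out the same two-line citation more explicitly.
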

\begin{proof}
Follows from notations and \cref{lem:monomial equalitymod}.
\end{proof}

Consider $P=\sum_M\alpha_ME_M \in R_{m,n}$ and $\varphi=(\varphi_1,\ldots,\varphi_n)\in R_m^n$. Set $p=\trop(P)$ and $S=\Supp(\varphi)$, then  $p(S)=\Vert(US(P,\varphi))$.

\medskip
The following tropical vanishing condition is a natural generalization of the case $m=1$, but now the evaluation $p(S)$ consists of a vertex set instead of a single minimum.

\begin{definition}
Let $p = \bigoplus_{M \in \Delta } a_M \odot \epsilon_M$ be a tropical differential polynomial. An $n$-tuple $S \in \mathcal{P}(\mathbb{Z}^m_{\geq0})^n$ is said to be a \textbf{solution} of $p$ if for every $J \in p(S)$ there exists $M_1, M_2 \in \Delta$ with $M_1 \neq M_2$ such that $J \in a_{M_1} \odot \epsilon_{M_1}(S)$ and $J \in a_{M_2} \odot \epsilon_{M_2}(S)$. Note that in the particular case of $p(S) = \emptyset$, $S$ is a solution of $p$.

\medskip
For a family of differential polynomials $H \subseteq \mathbb{T}_{m,n}$, $S$ is called a \textbf{solution} of $H$ if and only if $S$ is a solution of every tropical polynomial in $H$. The set of solutions of $H$ will be denoted by $\mathrm{Sol}(H)$.
\end{definition}

\begin{proposition}\label{easy-direction}
Let $G$ be a differential ideal in the ring of differential polynomials $R_{m,n}$.
If $\varphi \in \Sol(G)$, then $\Supp(\varphi) \in \Sol(\trop(G))$.
\end{proposition}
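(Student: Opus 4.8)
The plan is to fix an arbitrary $P \in G$ (the hypothesis that $G$ is a differential ideal is not needed for this direction, only that each of its elements vanishes at $\varphi$), write it as $P = \sum_{M \in \Delta} \alpha_M E_M$ with $\alpha_M \neq 0$, put $p = \trop(P) = \bigoplus_{M \in \Delta} a_M \odot \epsilon_M$ with $a_M = \trop(\alpha_M)$, and set $S = \Supp(\varphi)$. Recalling that $p(S) = \Vert(US(P,\varphi))$ with $US(P,\varphi) = \bigcup_{M\in\Delta}\bigl(\Supp(\alpha_M)+\Supp(E_M(\varphi))\bigr)$, we must show that for every $J \in p(S)$ there are two distinct indices $M_1, M_2 \in \Delta$ with $J \in a_{M_i} \odot \epsilon_{M_i}(S)$; the case $p(S) = \emptyset$ being vacuous, fix such a $J$.

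I would first isolate two elementary facts about $\Vert$ and Minkowski sums. (a) If $J$ is a vertex of a union $\bigcup_i X_i$ of elements of $\mathcal{P}(\mathbb{Z}^m_{\geq 0})$ and $J \in X_{i_0}$, then $J$ is a vertex of $X_{i_0}$: indeed $\mathcal{N}(X_{i_0} \setminus \{J\}) \subseteq \mathcal{N}\bigl((\bigcup_i X_i)\setminus\{J\}\bigr)$, and $J$ lies in neither Newton polygon. (b) If $J$ is a vertex of a Minkowski sum $A + B$ with $A, B \in \mathcal{P}(\mathbb{Z}^m_{\geq 0})$ nonempty, then the decomposition $J = a + b$ with $a \in A$, $b \in B$ is unique: from two distinct decompositions $J = a_1 + b_1 = a_2 + b_2$ one gets $J = \tfrac{1}{2}(a_1 + b_2) + \tfrac{1}{2}(a_2 + b_1)$, a convex combination of two points of $A + B$ that are both different from $J$, contradicting that $J$ is a vertex.

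The core of the argument is then a pigeonhole argument against $P(\varphi) = 0$. Applying (a) to $US(P,\varphi)$, for every $M$ with $J \in \Supp(\alpha_M) + \Supp(E_M(\varphi))$ the point $J$ is in fact a vertex of $\Supp(\alpha_M) + \Supp(E_M(\varphi))$; by (b), $J$ then has a unique representation $J = a + b$ with $a \in \Supp(\alpha_M)$ and $b \in \Supp(E_M(\varphi))$, so the coefficient of $t^J$ in $\alpha_M E_M(\varphi)$ equals the product of the $t^a$-coefficient of $\alpha_M$ and the $t^b$-coefficient of $E_M(\varphi)$, which is nonzero because $K$ is a field. Now if there were only one such index $M_0$, then for every other $M$ we have $\Supp(\alpha_M E_M(\varphi)) \subseteq \Supp(\alpha_M) + \Supp(E_M(\varphi)) \not\ni J$, so $t^J$ occurs with coefficient $0$ in $\alpha_M E_M(\varphi)$; hence the $t^J$-coefficient of $P(\varphi) = \sum_M \alpha_M E_M(\varphi)$ equals the nonzero contribution of $M_0$, contradicting $P(\varphi) = 0$. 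Therefore at least two distinct indices $M_1 \neq M_2$ satisfy $J \in \Supp(\alpha_{M_i}) + \Supp(E_{M_i}(\varphi))$, and by (a) again $J$ is a vertex of each of these two Minkowski sums.

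It remains to identify $\Vert\bigl(\Supp(\alpha_M) + \Supp(E_M(\varphi))\bigr)$ with $a_M \odot \epsilon_M(S)$. By \cref{lem:monomial equality}, $\epsilon_M(S) = \trop(E_M(\varphi))$, so \cref{lem:trop_morphism_of_semi_ring} gives $a_M \odot \epsilon_M(S) = \trop(\alpha_M) \odot \trop(E_M(\varphi)) = \trop(\alpha_M E_M(\varphi)) = \Vert(\Supp(\alpha_M E_M(\varphi)))$; and since $\mathcal{N}(\Supp(\alpha_M E_M(\varphi))) = \mathcal{N}(\Supp(\alpha_M)) + \mathcal{N}(\Supp(E_M(\varphi))) = \mathcal{N}(\Supp(\alpha_M) + \Supp(E_M(\varphi)))$ by the homomorphism property of the Newton polygon recorded in the proof of \cref{lem:trop_morphism_of_semi_ring}, \cref{equal_Vert_is_equal_N} shows these two sets have the same vertex set. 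Hence $J \in a_{M_1} \odot \epsilon_{M_1}(S)$ and $J \in a_{M_2} \odot \epsilon_{M_2}(S)$, which is exactly the tropical vanishing condition; as $P \in G$ was arbitrary, $\Supp(\varphi) \in \Sol(\trop(G))$. The main obstacle is fact (b) together with its ``no cancellation'' consequence — that a vertex of the Newton polygon of a product of power series is reached by a unique pair of monomials and therefore survives with nonzero coefficient; everything else is bookkeeping with $\Vert$ and the counting argument against $P(\varphi) = 0$.
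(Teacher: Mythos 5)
Your proof is correct and follows essentially the same route as the paper's: fix a vertex $J$ of $\trop(P)(\Supp(\varphi))$, use $P(\varphi)=0$ to force a second monomial whose contribution contains $J$, and use the fact that a vertex of a union is a vertex of any member containing it to conclude $J \in a_{M_i}\odot\epsilon_{M_i}(S)$ for two distinct indices. The only cosmetic difference is that you re-derive by hand the unique-decomposition (no-cancellation) property at a vertex of a Minkowski sum of supports, which the paper obtains packaged inside the multiplicativity of $\trop$ from Lemma~\ref{lem:trop_morphism_of_semi_ring}.
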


\begin{proof}
Let $\varphi$ be a solution of $G$ and $S = \Supp(\varphi)$. Let $P= \sum_{M \in \Delta} \alpha_M  E_M \in G$ 
and $p = \trop(P) = \bigoplus_{M \in \Delta}a_M \odot \epsilon_M,$ where $a_M = \trop(\alpha_M)$. We need to show that $S$ is a solution of $p$. 
Let $J \in p(S)$ be arbitrary. 
By the definition of $\oplus$, there is an index $M_1$ such that \[J \in a_{M_1} \odot \epsilon_{M_1}(S).\]
Hence, by \cref{lem:monomial equality}, and multiplicative property of trop \cref{lem:trop_morphism_of_semi_ring}
\[J \in \Vert(\Supp(\alpha_{M_1} E_{M_1}(\varphi))).\] 
Since $P(\varphi) = 0$, there is another index $M_2 \neq M_1$ such that \[J \in \Supp(\alpha_{M_2} E_{M_2}(\varphi)),\] because otherwise there would not be cancellation. 
Since $J$ is a vertex of $p(S)$, it follows that $J$ is a vertex of every subset of $\mathcal{N}(p(S))$ containing $J$ and in particular of $\mathcal{N}(\mathrm{Supp}(\alpha_{M_2} E_{M_2}(\varphi)))$. 
Therefore, 
\[J \in a_{M_2} \odot \epsilon_{M_2}(S)\] 
and because $J$ and $P$ were chosen arbitrary, $S$ is a solution of $G$.
\end{proof}

\section{The Fundamental Theorem}
\label{S_TFT}
Let $G \subset R_{m,n}$ be a differential ideal. Then \cref{easy-direction} implies that $\Supp(\Sol(G)) \subseteq \Sol(\trop(G))$. The main result of this paper is to show that the reverse inclusion holds as well if the base field $K$ is uncountable.
\begin{thm}[Fundamental Theorem]\label{fundamental theorem}
Let $K$ be an uncountable, algebraically closed field of characteristic zero. Let $G$ be a differential ideal in the ring $R_{m,n}$. Then 
\[ \Supp(\Sol(G)) = \Sol(\trop(G)). \]
\end{thm}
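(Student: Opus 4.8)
By \cref{easy-direction} it suffices to prove the inclusion $\Sol(\trop(G)) \subseteq \Supp(\Sol(G))$. The plan is to fix $S = (S_1,\ldots,S_n) \in \Sol(\trop(G))$ and to construct a solution $\varphi \in \Sol(G)$ with $\Supp(\varphi) = S$. First I would reduce to a finitely generated situation: by \cref{prop_Ritt_Raudenbush} there is a finite $\Phi = \{P_1,\ldots,P_s\} \subseteq G$ with $\Sol(G) = \Sol(\Phi)$; since $\trop(\Phi) \subseteq \trop(G)$, the tuple $S$ is still a tropical solution of each $P_\ell$, and it is enough to realize $S$ as the support of a common solution of $\Phi$. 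The differential polynomials in $\Phi$ involve only finitely many derivatives $x_{i,J}$, say with $\|J\|_\infty \le r$.

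\textbf{Setting up the system of coefficient equations.} For each $i$, introduce an undetermined power series $\varphi_i = \sum_{J \in S_i} a_{i,J}\, t^J$ whose support is forced to lie inside $S_i$, with the $a_{i,J}$ treated as indeterminates over $K$. Substituting into each $P_\ell$ and collecting the coefficient of each monomial $t^K$ produces a countable family of polynomial equations $F_{\ell,K}\bigl((a_{i,J})\bigr) = 0$ in the countably many unknowns $a_{i,J}$. We want a point of the resulting affine scheme $V$ (over $K$, cut out inside $\mathbb{A}^{\#\{(i,J)\}}$) at which, in addition, \emph{every} $a_{i,J}$ with $J \in S_i$ is nonzero — this last requirement guarantees $\Supp(\varphi_i) = S_i$ exactly, not merely $\subseteq S_i$. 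The tropical vanishing condition is exactly what is needed to show this scheme is nonempty and, more importantly, that the open locus where all relevant coordinates are nonzero is nonempty. Concretely, fix $\ell$ and a monomial exponent $K$ appearing in the expansion of $P_\ell(\varphi)$; the leading (lowest-degree along the Newton polygon, i.e.\ vertex) contributions of the monomials $E_M(\varphi)$ are governed by $\Val_J(S_i)$, and by the definition of a tropical solution, whenever $K$ is a vertex of $p_\ell(S)$ there are at least two monomials $M_1 \neq M_2$ contributing to the $t^K$-coefficient. This means $F_{\ell,K}$ is not a nonzero constant (nor a monomial in a single variable), so imposing it does not kill the open condition — one can always solve for one coefficient in terms of the others while keeping the others generic and nonzero.

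\textbf{The uncountability argument.} The crux, and the expected main obstacle, is to pass from "each individual equation is compatible with all coordinates being nonzero" to "there is a single point satisfying all (countably many) equations simultaneously and having all coordinates nonzero." This is precisely where uncountability of $K$ enters. I would organize the $a_{i,J}$ into a sequence and build the solution coordinate by coordinate (or finite-block by finite-block), at each stage choosing the next coefficient from $K$ avoiding the finitely-or-countably-many bad values dictated by the equations involving only coefficients chosen so far, using that a nonzero polynomial over an infinite (here uncountable) field has a point where it is nonzero, and that a countable union of proper subvarieties cannot cover $\mathbb{A}^N(K)$ when $K$ is uncountable. A clean way to package this: show $V$ is a scheme of countable type whose every "finite truncation" is nonempty of positive dimension on the nonvanishing locus, and invoke that over an uncountable algebraically closed field such an inverse limit of nonempty constructible sets is nonempty (a standard Baire-category / uncountability argument, as in the $m=1$ case of \cite{AGT16} and the arc-space arguments of \cite{DenefLipshitz}-type). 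One must be careful that the equations $F_{\ell,K}$ do not form a "cascading" system forcing a coefficient to be zero; the two-term cancellation from the tropical solution condition is exactly the structural input preventing this, and verifying this interaction carefully — that at every stage the constraints remain satisfiable with nonzero values — is the technical heart of the proof. Once such a point $(a_{i,J})$ is found, the series $\varphi = (\varphi_1,\ldots,\varphi_n)$ satisfies $P_\ell(\varphi) = 0$ for all $\ell$, hence $\varphi \in \Sol(\Phi) = \Sol(G)$, and $\Supp(\varphi) = S$, completing the proof.
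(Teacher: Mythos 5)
There is a genuine gap, and it is located exactly where you flag the ``technical heart.'' Your strategy only ever uses the tropical vanishing condition for the finitely many generators $P_1,\ldots,P_s$ (after the reduction via \cref{prop_Ritt_Raudenbush}), but the hypothesis of the theorem is $S \in \Sol(\trop(G))$ for the \emph{whole differential ideal} $G$, and this extra strength is indispensable. If your construction worked as described, it would prove $\Sol(\trop(\Phi)) \subseteq \Supp(\Sol(\Phi))$ for a finite system $\Phi$, which is false already for $m=n=1$: take $G=[x_{(0)}-t]$ in $R_{1,1}$ and $S=\{1,5\}$. Then $\trop(x_{(0)}-t)(S)=\{1\}$ and the vertex $1$ is attained by both monomials, so $S$ solves the tropicalization of the generator; yet the only solution of $G$ is $\varphi=t$, so $S \notin \Supp(\Sol(G))$. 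What rules $S$ out is another element of the ideal, e.g.\ $x_{(2)}=\Theta(2)(x_{(0)}-t) \in G$, whose tropicalization $S$ fails to solve. So the obstruction is in general carried by elements of $G$ that are not among your chosen generators, and your coefficient-by-coefficient argument, which only invokes the two-term cancellation for the $F_{\ell,K}$ coming from the $P_\ell$, cannot detect it. Relatedly, the assertion that each $F_{\ell,K}$ having at least two terms lets you ``always solve for one coefficient while keeping the others generic and nonzero'' is not proved and is not true for such cascading coupled systems; this is precisely the statement that would need the full strength of $S\in\Sol(\trop(G))$, and no mechanism for exploiting it is given.

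For comparison, the paper avoids the direct construction entirely and argues by contraposition: if there is no solution with support exactly $S$ (i.e.\ $A_{\infty,S}=\emptyset$), then by \cref{finite-approximation} some finite truncation $A_{k,S}$ is already empty --- this is where uncountability of $K$ enters, via an ultrapower of $K$ and the observation that a countably generated $K$-algebra modulo a maximal ideal is $K$ itself (not via a Baire-type ``countable union of proper subvarieties'' argument applied to an infinite greedy construction). Hilbert's Nullstellensatz then turns the emptiness of $A_{k,S}$ into an explicit algebraic certificate, which is reassembled into a single differential polynomial $P \in G$ for which $(0,\ldots,0)$ is a vertex of $\trop(P)(S)$ attained by exactly one monomial; hence $S \notin \Sol(\trop(G))$. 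This produces exactly the kind of ``witness'' element of $G$ beyond the generators that your approach has no way to access.
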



The proof of the Fundamental Theorem will take the rest of the section and is split into several parts. First let us introduce some notations. If $J=(j_1, \ldots, j_m)$ is an element of $\mathbb{Z}^m_{\geq0}$, we define by $J!$ the component-wise product $j_1! \cdots j_m !$.
The bijection between $K^{\mathbb{Z}^m_{\geq0}}$ and $R_m$ given by
\[\begin{array}{cccc}
\psi \colon & K^{\mathbb{Z}^m_{\geq0}} & \to & R_m \\
& \underline{a}=(a_J)_{J \in \mathbb{Z}^m_{\geq0}} &  \mapsto & \displaystyle \sum_{J \in \mathbb{Z}^m_{\geq0}} \frac{1}{J!}a_J t^J
\end{array}\]
allows us to identify points of $R_m$ with points of $K^{\mathbb{Z}^m_{\geq0}}$.
Moreover, if $I \in \mathbb{Z}^m_{\geq0}$, the mapping $\psi$ has the following property:
\[\Theta(I)\psi(\underline{a}) = \sum_{J \in \mathbb{Z}^m_{\geq0}} \frac{1}{J!}a_{I+J} t^J\]
which implies
\[\underline{a} = (\Theta(I)\psi(\underline{a})|_{t=0})_{I \in \mathbb{Z}^m_{\geq0}}.\]

Fix for the rest of the section a finite set of differential polynomials $\Sigma=\{P_1, \ldots, P_s\} \subseteq G$ such that $\Sigma$ has the same solution set as $G$ (this is possible by \cref{prop_Ritt_Raudenbush}). For all $\ell \in \{1, \ldots, s\}$ and $I \in \mathbb{Z}_{\geq 0}^m$ we define
\[ F_{\ell, I} = ( \Theta(I)P_\ell)|_{t_1 = \cdots = t_m = 0} \, \in K\big[x_{i, J} : 1 \leq i \leq n, J \in \mathbb{Z}_{\geq 0}^m\big] \]
and
\[ A_{\infty} = \{(a_{i,J}) \in K^{n \times (\mathbb{Z}_{\geq 0}^m)} : F_{\ell, I}(a_{i,J}) = 0 \textrm{ for all } 1 \leq \ell\leq s, I \in \mathbb{Z}_{\geq 0}^m \}.\]
The set $A_{\infty}$ corresponds to the formal power series solutions of the differential system $\Sigma = 0$ as the following lemma shows.

\begin{lemma}
Let $\varphi \in K[[t_1, \ldots, t_m]]^n$ with $\varphi = (\varphi_1, \ldots, \varphi_n)$, where 
\[\varphi_i = \sum_{J \in \mathbb{Z}_{\geq 0}^m} \frac{a_{i,J}}{J!} t^J.\]
Then $\varphi$ is a solution of $\Sigma = 0$ if and only if $(a_{i,J}) \in A_\infty$.
\end{lemma}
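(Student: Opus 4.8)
The plan is to turn the condition $P_\ell(\varphi)=0$, which is an equality of formal power series, into the vanishing of all Taylor coefficients, and then to identify the $I$-th Taylor coefficient of $P_\ell(\varphi)$ with $F_{\ell,I}\big((a_{i,J})\big)$ up to the nonzero scalar $I!$.

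First I would record the elementary fact that a series $f=\sum_J c_J t^J\in R_m$ vanishes if and only if $(\Theta(I)f)|_{t_1=\cdots=t_m=0}=0$ for every $I\in\mathbb{Z}^m_{\geq0}$: a direct computation gives $(\Theta(I)f)|_{t=0}=I!\,c_I$, and $I!\neq0$ since $K$ has characteristic zero. Applied with $f=P_\ell(\varphi)$, this shows that $\varphi$ is a solution of $\Sigma=0$ precisely when $\big(\Theta(I)(P_\ell(\varphi))\big)\big|_{t=0}=0$ for all $\ell$ and all $I$.

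The key step is the chain rule $\Theta(I)(P(\varphi))=(\Theta(I)P)(\varphi)$ for every $P\in R_{m,n}$ and every $I\in\mathbb{Z}^m_{\geq0}$; equivalently, that the evaluation map $\mathrm{ev}_\varphi\colon R_{m,n}\to R_m$, $P\mapsto P(\varphi)$, is a morphism of differential rings. It suffices to prove this for a single derivation $\delta_k$ and then iterate over the factors of $\Theta(I)$. Now $\mathrm{ev}_\varphi$ is a $K$-algebra homomorphism, and both $\delta_k\circ\mathrm{ev}_\varphi$ and $\mathrm{ev}_\varphi\circ\delta_k$ are $K$-linear maps $R_{m,n}\to R_m$ satisfying the Leibniz rule relative to $\mathrm{ev}_\varphi$; hence their difference is again such a map, its kernel is a $K$-subalgebra of $R_{m,n}$, and it is enough to check the equality on the generators. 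On a constant $c\in R_m$ both maps give $\delta_k c$ (since $\mathrm{ev}_\varphi$ restricts to the identity on $R_m$ and $\delta_k$ preserves $R_m$), and on a derivative $x_{i,J}$ both give $\Theta(J+e_k)\varphi_i=\delta_k\Theta(J)\varphi_i$, using $\delta_k x_{i,J}=x_{i,J+e_k}$ and $x_{i,J}(\varphi)=\Theta(J)\varphi_i$. This proves the chain rule.

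Finally I would evaluate at $t=0$. Writing $\Theta(I)P_\ell=\sum_M\beta_M E_M$ with $\beta_M\in K[[t_1,\ldots,t_m]]$, the definition of $F_{\ell,I}$ gives $F_{\ell,I}=\sum_M\beta_M(0)E_M$ as a polynomial over $K$ in the variables $x_{i,J}$. Since evaluation at $t=0$ is a ring homomorphism $R_m\to K$ and $(\Theta(J)\varphi_i)|_{t=0}=a_{i,J}$ by the displayed property of $\psi$, we obtain
\[(\Theta(I)P_\ell)(\varphi)\big|_{t=0}=\sum_M\beta_M(0)\,E_M(\varphi)\big|_{t=0}=\sum_M\beta_M(0)\prod_{i,J}a_{i,J}^{M_{i,J}}=F_{\ell,I}\big((a_{i,J})\big).\]
Combining the three steps, $\varphi$ is a solution of $\Sigma=0$ if and only if $F_{\ell,I}\big((a_{i,J})\big)=0$ for all $1\leq\ell\leq s$ and all $I\in\mathbb{Z}^m_{\geq0}$, which is exactly the condition $(a_{i,J})\in A_\infty$. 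The only point requiring genuine care is the chain rule of the third paragraph; everything else is bookkeeping with Taylor coefficients.
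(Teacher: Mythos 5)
Your argument is correct and follows the same route as the paper: the paper simply invokes the Taylor formula $P_\ell(\varphi)=\sum_{I}\frac{F_{\ell,I}((a_{i,J}))}{I!}t^I$ (citing Seidenberg), whereas you prove that identity yourself by noting that evaluation at $\varphi$ is a morphism of differential rings and then reading off Taylor coefficients at $t=0$. So the only difference is that you supply a self-contained proof of the cited formula; the chain-rule step you single out is exactly the content hidden in that citation, and your verification of it on generators is sound.
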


\begin{proof}
This statement follows from formula
\[ P_\ell(\varphi_1,\ldots, \varphi_n) = \sum_{I \in \mathbb{Z}_{\geq 0}^m} \frac{F_{\ell,I}((a_{i,J})_{i,J})}{I!}t^I,\]
which is often called Taylor formula. See~\cite{Seidenberg58} for more details.
\end{proof}

For any $S = (S_1, \ldots, S_n) \in \mathcal{P}(\mathbb{Z}_{\geq 0}^m)^n$ we define 
\[ A_{\infty, S} = \{ (a_{i,J}) \in A_\infty : a_{i,J} = 0 \textrm{ if and only if } J \notin S_i\}.\]
This set corresponds to power series solutions of the system $\Sigma = 0$ which have support exactly $S$. In particular, $S \in \Supp(\Sol(G))$ if and only if $A_{\infty, S} \neq \emptyset$. 

\medskip
The sets $A_{\infty}$ and $A_{\infty, S}$ refer to infinitely many coefficients. We want to work with a finite approximation of these sets. For this purpose, we make the following definitions. For each integer $k \geq 0$, choose $N_k \geq 0$ minimal such that for every $\ell \in \{1, \ldots, s\}$ and $||I||_\infty \leq k$ it holds that
\[ F_{\ell, I} \in K[x_{i, J} : 1 \leq i \leq n, ||J||_\infty \leq N_k].\]
Note that for $k_1 \leq k_2$ it follows that $N_{k_1} \leq N_{k_2}.$
Then we define 
\begin{align*}
    A_k = \{(a_{i,J}) \in &K^{n \times \{1, \ldots, N_k\}^m} : F_{\ell, I}(a_{i,J}) = 0 \\
    &\quad\quad\textrm{ for all } 1 \leq \ell\leq s, ||I||_\infty \leq k \}
\end{align*}

and 
\[ A_{k,S} = \{ (a_{i,J}) \in A_k : a_{i,J} = 0 \textrm{ if and only if } J \notin S_i\}.\]

\begin{proposition}\label{finite-approximation}
Let $S \in \mathcal{P}(\mathbb{Z}_{\geq 0}^m)^n$. If $A_{\infty, S} = \emptyset$, then there exists $k \geq 0$ such that $A_{k, S} = \emptyset$. 
\end{proposition}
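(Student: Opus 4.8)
The statement is a compactness-type assertion: if the infinite system $\{F_{\ell,I} = 0\}$ together with the sign conditions $a_{i,J} = 0 \iff J \notin S_i$ has no solution over $K$, then already a finite truncation has no solution. The natural approach is to argue by contraposition: assume that $A_{k,S} \neq \emptyset$ for every $k \geq 0$ and produce an element of $A_{\infty,S}$. The obstacle is that the $A_{k,S}$ are \emph{not} nested in an obvious way — $A_{k,S}$ lives in $K^{n\times\{1,\ldots,N_k\}^m}$ and a point of $A_{k+1,S}$ restricts to a point of $A_k$ but not necessarily of $A_{k,S}$, because... actually it does: the condition $a_{i,J} = 0 \iff J \notin S_i$ for $\|J\|_\infty \leq N_{k+1}$ implies the same condition for $\|J\|_\infty \leq N_k$. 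So the restriction maps $\pi_{k+1,k}\colon A_{k+1,S} \to A_{k,S}$ are well-defined, and I would like to take an element of the inverse limit $\varprojlim_k A_{k,S}$.

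\textbf{Key steps.} First I would set up the restriction maps $\pi_{k+1,k}\colon A_{k+1,S}\to A_{k,S}$ obtained by forgetting the coordinates $a_{i,J}$ with $N_k < \|J\|_\infty \leq N_{k+1}$, and check they are well-defined (both the equations $F_{\ell,I}=0$ for $\|I\|_\infty\leq k$ and the support condition survive restriction). An element of the inverse limit then assembles into a single tuple $(a_{i,J})_{i,J}\in K^{n\times\mathbb{Z}^m_{\geq0}}$ satisfying all $F_{\ell,I}=0$ (every such equation involves only finitely many variables, hence is tested at some finite level) and the full support condition, i.e.\ an element of $A_{\infty,S}$. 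So it suffices to show that if all $A_{k,S}$ are nonempty then $\varprojlim_k A_{k,S}\neq\emptyset$. This is where one needs more than set-theoretic nonsense: a nonempty inverse limit of nonempty sets over $\mathbb{Z}_{\geq0}$ requires some finiteness or compactness. The clean route is to endow each $A_{k,S}$ with the Zariski topology: it is a constructible subset of the affine variety $A_k$ (intersect the closed set $\{F_{\ell,I}=0\}$ with the locally closed set cut out by $a_{i,J}=0$ for $J\notin S_i$ and $a_{i,J}\neq 0$ for $J\in S_i$, $\|J\|_\infty\leq N_k$). Then take Zariski closures $\overline{A_{k,S}}$ inside $K^{n\times\{1,\ldots,N_k\}^m}$; the maps $\pi_{k+1,k}$ send $\overline{A_{k+1,S}}$ into $\overline{A_{k,S}}$, and the images $\pi_{k+1,k}^{(j)}(\overline{A_{k+j,S}})\subseteq\overline{A_{k,S}}$ form a descending chain of closed subsets, which stabilizes by Noetherianity to some nonempty closed set $C_k$. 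The $C_k$ form an inverse system with \emph{surjective} transition maps between nonempty sets, so $\varprojlim C_k\neq\emptyset$, giving a point of $A_{\infty}$ with support contained in $S$; one then has to promote it to support exactly $S$ using that $K$ is infinite (choosing the finitely-many ambiguous coordinates generically, or noting $A_{k,S}$ being dense in $\overline{A_{k,S}}$ lets us track the nonvanishing conditions on the finitely many relevant coordinates at each level).

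\textbf{Main obstacle.} The delicate point is precisely that $A_{k,S}$ is only locally closed, not closed, so passing to the inverse limit of closures loses the strict-vanishing information $a_{i,J}\neq 0$ for $J\in S_i$; the fix is that on the finitely many coordinates $a_{i,J}$ with $\|J\|_\infty\leq N_k$, the nonvanishing is an open dense condition on $\overline{A_{k,S}}$, so when one extracts the stable image $C_k$ one should instead track the stable image of $A_{k,S}$ itself (a constructible set) and use Chevalley's theorem to keep everything constructible, or alternatively prove directly that $\bigcap_j \pi_{k+1,k}^{(j)}(A_{k+j,S})$ is nonempty and still has the right nonvanishing behavior on level-$k$ coordinates. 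Handling this bookkeeping carefully — making sure the limit point has support \emph{exactly} $S$ and not merely contained in $S$ — is the real content; everything else is the standard Noetherian descending-chain / inverse-limit argument.
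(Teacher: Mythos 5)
Your plan never uses the uncountability of $K$, and that is not a cosmetic omission: the statement is genuinely false over a countable algebraically closed field, so no argument of the kind you sketch can close without it. Indeed, by \cite[Corollary 4.7]{DenefLipshitz} (cf.\ \cref{example:CountableField}) there is a system over $\mathbb{Q}$ with a solution in $\mathbb{C}[[t_1,\ldots,t_m]]$, say of support $S$, but with no solution in $\overline{\mathbb{Q}}[[t_1,\ldots,t_m]]$; each $A_{k,S}$ is a constructible set defined over $\overline{\mathbb{Q}}$ with a $\mathbb{C}$-point, hence has a $\overline{\mathbb{Q}}$-point, while $A_{\infty,S}(\overline{\mathbb{Q}})=\emptyset$. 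So the missing ingredient is precisely a statement like: a nonempty variety over an uncountable field is not a countable union of proper Zariski-closed subsets (equivalently, a countable decreasing family of dense constructible subsets has nonempty intersection). This is what you would need at the step where you pass from the stabilized closures to actual points of $\bigcap_{k'}\pi_{k',k}(A_{k',S})$.

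Concretely, the gap sits in your stabilization-and-surjectivity step. Noetherian DCC applies to closed sets, but the sets you must track, $\pi_{k',k}(A_{k',S})$, are only constructible (Chevalley), and constructible sets admit infinite strictly decreasing chains; if instead you pass to closures $C_k$ of images, then the transition maps $C_{k+1}\to C_k$ are only dominant, not surjective (image of a closure lies in the closure of the image), so the claim ``surjective system of nonempty sets, hence $\varprojlim C_k\neq\emptyset$'' is unjustified, and in any case a point of $\varprojlim C_k$ only has support contained in $S$. The standard repair is: the closures of the images stabilize to some $Z_k$; each image $\pi_{k',k}(A_{k',S})$ then contains a dense open subset of $Z_k$; and one needs $K$ uncountable to conclude that the countable intersection of these dense opens (and likewise the fibers needed for surjectivity between the stable images) is nonempty, after which the exact-support condition can be tracked level by level. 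With that lemma added, your route can be completed, but it is genuinely different from the paper's proof, which instead embeds compatible finite-level solutions into an ultrapower $\mathbb{K}$ of $K$ and then descends: uncountability enters there through the fact that a countably generated $K$-algebra has residue fields equal to $K$ (the $1/(t-\alpha)$ linear-independence argument), yielding a $K$-point of $A_{\infty,S}$ with support exactly $S$.
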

\begin{proof}
Assume that $A_{k,S} \neq \emptyset$ for every $k \geq 0$; we show that this implies $A_{\infty, S} \neq \emptyset$. We follow the strategy of the proof of \cite[Theorem 2.10]{DenefLipshitz}: first we use the ultrapower construction to construct a larger field $\mathbb{K}$ over which a power series solution with support $S$ exists, and then we show that this implies the existence of a solution with the same support and with coefficients in $K$. For more information on ultrafilters and ultraproducts, the reader may consult~\cite{BDLD79}.

For each integer $k \geq 0$, choose an element $(a^{(k)}_{i,J})_{1 \leq i \leq n, ||J||_{\infty} \leq N_k} \in A_{k,S}$. Fix a non-principal ultrafilter $\mathcal{U}$ on the natural numbers $\mathbb{N}$ and consider the ultrapower $\mathbb{K}$ of $K$ along $\mathcal{U}$. In other words, $\mathbb{K} = (\prod_{r \in \mathbb{N}} K) / \sim$ where $x \sim y$ for $x = (x_r)_{r \in \mathbb{N}}$ and $y = (y_r)_{r \in \mathbb{N}}$ if and only if the set $\{r \in \mathbb{N} : x_r = y_r\}$ is in $\mathcal{U}$. We will denote the equivalence class of a sequence $(x_r)$ by $[(x_r)]$. We consider $\mathbb{K}$ as a $K$-algebra via the diagonal map $K \to \mathbb{K}$. Now for each $i$ and $J$, we may define $a_{i,J} \in \mathbb{K}$ as 
\[ a_{i,J} = [ (a_{i,J}^{(k)} : k \in \mathbb{N}) ] \]
where we set $a_{i,J}^{(k)} = 0$ for the finitely many values of $k$ with $||J||_{\infty} > N_k$. For all $\ell$ and $I$, we have that $F_{\ell, I}((a^{(k)}_{i,J})_{i,J}) = 0$ for $k$ large enough, and so $F_{\ell, I}((a_{i,J})_{i,J}) = 0$ in $\mathbb{K}$, because the set of $k$ such that $F_{\ell, I}((a^{(k)}_{i,J})_{i,J}) \neq 0$ is finite. Moreover, for $J \in S_i$ we have, by hypothesis, $a_{i,J}^{(k)} \neq 0$ for all sufficiently large $k$, so $a_{i,J} \neq 0$ in $\mathbb{K}$. On the other hand, for $J \notin S_i$ we have $a_{i,J}^{(k)} = 0$ for all $k$, so also $a_{i,J} = 0$. 
Now consider the ring 
\[R = K\left[\begin{array}{l}
             x_{i,J}: 1 \leq i \leq n, J \in \mathbb{Z}^m_{\geq 0}\\
             x_{i,J}^{-1} : 1 \leq i \leq n, J \in S_i
             \end{array}\right] / \left(\begin{array}{l}
             F_{\ell, I} : 1 \leq \ell \leq s,  I \in \mathbb{Z}_{\geq 0}^m\\
             x_{i,J}: 1 \leq i \leq n, J \notin S_i
              \end{array}\right)
             \]
The paragraph above shows that the map $R \to \mathbb{K}$ defined by sending $x_{i,J}$ to $a_{i,J}$ is a well-defined ring map. In particular, $R$ is not the zero ring. Let $\mathfrak{m}$ be a maximal ideal of $R$. We claim that the composition $K \to R \to R/\mathfrak{m}$ is an isomorphism. Indeed, $R/\mathfrak{m}$ is a field, and as a $K$-algebra it is countably generated, since $R$ is. Therefore, it is of countable dimension as $K$-vector space (it is generated as $K$-vector space by the products of some set of generators as a $K$-algebra). If $t \in R/\mathfrak{m}$ were transcendental over $K$, then by the theory of partial fraction decomposition, the elements $1/(t - \alpha)$ for $\alpha \in K$ would form an uncountable, $K$-linearly independent subset of $R/\mathfrak{m}$. This is not possible, so $R/\mathfrak{m}$ is algebraic over $K$. Since $K$ is algebraically closed, we conclude that $K = R/\mathfrak{m}$. Now let $b_{i,J} \in K$ be the image of $x_{i,J}$ in $R/\mathfrak{m}=K$. Then by construction, the set $(b_{i,J})$ satisfies the conditions $F_{\ell,I}((b_{i,J})) = 0$ for all $\ell$ and $I$, and $b_{i, J} = 0$ if and only if $J \notin S_i$. So $(b_{i,J})$ is an element of $A_{\infty, S}$, and in particular $A_{\infty, S} \neq \emptyset$. 
\end{proof}

\begin{proof}[Proof of \cref{fundamental theorem}] 
We now prove the remaining direction of the Fundamental Theorem by contraposition. Let $S = (S_1, \ldots, S_n)$ in $\mathcal{P}(\mathbb{Z}_{\geq 0}^m)^n$ be such that $A_{\infty, S} = \emptyset$, i.e. there is no power series solution of $\Sigma = 0$ in $K[[t_1, \ldots, t_m]]^n$ with $S$ as the support. Then by \cref{finite-approximation} there exists $k \geq 0$ such that $A_{k,S} = \emptyset$. Equivalently, 
\[ V\left(\begin{array}{l}
     F_{\ell, I} : 1 \leq \ell \leq s, ||I||_{\infty} \leq N_k \\
     x_{i,J} : 1 \leq i \leq n, J \notin S_i, ||J||_{\infty} \leq k
\end{array}\right) \subseteq V\Biggl(\prod_{\substack{1 \leq i \leq n\\ J \in S_i\\ ||J||_{\infty} \leq N_k}} x_{i,J} \Biggr).\]
By Hilbert's Nullstellensatz, there is an integer $M \geq 1$ such that
\[ E := \bigg(\prod_{\substack{1 \leq i \leq n\\ J \in S_i\\||J||_{\infty} \leq N_k}} x_{i,J}\bigg)^M \in  \left\langle\begin{array}{l}
     F_{\ell, I} : 1 \leq \ell \leq s, ||I||_{\infty} \leq N_k \\
     x_{i,J} : 1 \leq i \leq n, J \notin S_i, ||J||_{\infty} \leq k
\end{array}\right\rangle. \]
Therefore, there exist $G_{\ell, I}$ and $H_{i,J}$ in $K[x_{i,J} : 1 \leq i \leq n, ||J||_{\infty} \leq N_k]$ such that
\[ E = \sum_{\substack{1 \leq \ell \leq s\\ ||I||_{\infty} \leq k}} G_{\ell, I}F_{\ell, I} + \sum_{\substack{1 \leq i \leq n\\ J \notin S_i \\ ||J||_{\infty} \leq N_k}} H_{i,J} x_{i,J}. \]
Define the differential polynomial $P$ by 
\[P = \sum_{\substack{1 \leq \ell \leq s\\ ||I||_{\infty} \leq k}} G_{\ell, I}\Theta(I)(P_\ell).\]
Then $P$ is an element of the differential ideal generated by $P_1, \ldots, P_s$, so in particular $P \in G$. Since $F_{\ell, I} = \Theta(I)(P_\ell) |_{t = 0}$, there exist $h_i \in R_{m,n}$ such that
\[ P = E - \sum_{\substack{1 \leq i \leq n\\ J \notin S_i \\ ||J||_{\infty} \leq N_k}} H_{i,J} x_{i,J} \,+\, t_1 h_1 + \ldots + t_m h_m.\]
Notice that $E$ occurs as a monomial in $P$, since it cannot cancel with other terms in the sum above. By construction we have $\trop(E)(S) = \{(0,\ldots, 0)\}$. However, we have $(0, \ldots, 0) \notin \trop(H_{i,J} x_{i,J})(S)$ because $J \notin S_i$, and we have $(0, \ldots, 0) \notin \trop(t_i h_i)(S)$ because the factor $t_i$ forces the $i$th coefficient of each element of $\trop(t_i h_i)(S)$ to be at least $1$. Hence, the vertex $\{(0,\ldots, 0)\}$ in $\trop(P)(S)$ is attained exactly once, in the monomial $E$, and therefore, $S$ is not a solution of $\trop(P)$. Since $P \in G$, it follows that $S \notin \Sol(\trop(G))$, which proves the statement.
\end{proof}

\section{Examples and remarks on the Fundamental Theorem}
\label{Section_ExLim}
In this section we give some examples to illustrate the results obtained in the previous sections. Moreover, we show that some straight-forward generalizations of the Fundamental Theorem from~\cite{AGT16} and our version, Theorem~\ref{fundamental theorem}, do not hold. Also we give more directions for further developments.

\begin{example}
Let us consider the system of differential polynomials
\begin{align*}
\Sigma=\{&P_1=x_{1,(1,0)}^2 - 4\,x_{1,(0,0)}\,,\ 
P_2=x_{1,(1,1)}\,x_{2,(0,1)} - x_{1,(0,0)} + 1\,,\\ 
&P_3=x_{2,(2,0)} - x_{1,(1,0)}\}\,
\end{align*}
in $R_{2,2}$. 
By means of elimination methods in differential algebra 
such as the ones implemented in the MAPLE
\texttt{DifferentialAlgebra} package, it can be proven that
\begin{align*} \Sol(\Sigma)=\{
&\varphi_1(t_1,t_2)=2\,c_0\,t_1+c_0^2+\sqrt{2}\,c_0\,t_2+t_1^2+\sqrt{2}\,t_1\,t_2+\frac{1}{2}\,t_2^2,\\
&\varphi_2(t_1,t_2)=c_2\,t_1+c_1+\frac{1}{2}\,\sqrt{2}\,(c_0^2-1)\,t_2+c_0\,t_1^2 \\
 & \quad +\sqrt{2}\,c_0\,t_1\,t_2 +\frac{1}{2}\,c_0\,t_2^2 \\
& \quad +\frac{1}{3}\,t_1^3+\frac{1}{2}\,\sqrt{2}\,t_1^2\,t_2+\frac{1}{2}\,t_1\,t_2^2+\frac{1}{12}\,\sqrt{2}\,t_2^3 \},
\end{align*}
where~$c_0,c_1,c_2 \in K$ are arbitrary constants. 
By setting $c_0=c_2=0,c_1 \neq 0$, we obtain for example that \[(\{(2,0),(1,1),(0,2)\},\{(0,0),(0,1),(3,0),(2,1),(1,1),(0,3)\})\]
is in $\Supp(\Sol(\Sigma))$.

\medskip
Now we illustrate that by our results necessary conditions and relations on the support can be found.
Let $(S_1,S_2) \in \mathcal{P}(\mathbb{Z}_{\ge 0}^2)^2$ be a solution of $\trop([\Sigma])$. Let us first consider 
\[\trop(P_1)(S_1,S_2)=\Vert(2 \cdot \Theta_{\trop}(1,0)S_1 \, \cup \, S_1).\]
If we assume that $(0,0) \in S_1$, then $(0,0)$ is a vertex of $S_1$. By the definition of a solution of a tropical differential polynomial, $(0,0)$ must be a vertex of the term $2\cdot \Theta_{\trop}(1,0)S_1$ as well, so we then know that $(1,0) \in S_1$. Conversely, if $(1,0) \in S_1$, then $(0,0) \in S_1$ follows. This is what we expect since the corresponding monomials in $\varphi_1$ vanish if and only if $c_0=0$.\\
Now consider 
\begin{equation*}
    \begin{aligned}
    &\trop(\Theta(1,0)P_1)(S_1,S_2)=\\
    &\Vert(\Theta_{\trop}(1,0)S_1+\Theta_{\trop}(2,0)S_1\,\cup\,\Theta_{\trop}(1,0)S_1).
    \end{aligned}
\end{equation*}
If we assume that $(0,0)$ is not a vertex of this expression, which implies that $(1,0) \notin S_1$, and $(k,0)$ is a vertex for some $k \ge 1$, then we obtain from the two tropical differential monomials that necessarily $(k,0)=(2k-1,0)$. This is fulfilled only for $k=1$ and hence, $(2,0) \in S_1$.

\end{example}

\medskip
A natural way for defining $\odot$ and $\oplus$ in Section~\ref{sec:the tropical semi-ring} would be to simply take the Newton polygon and not take its vertex set, as we do. If we do this, then some intermediate results and in particular Proposition~\ref{easy-direction}, do not hold anymore as the following example shows.
\begin{example} \label{example:NewtonPolygon}
Let $\{e_1,\ldots,e_4\}$ be the standard basis for $\mathbb{Z}^4_{\geq0}$. We consider the differential ideal in $R_{4,1}=K[[t_1,\ldots,t_4]]\{x\}$ generated by
\[ P = x_{e_3}x_{e_4} + (-t_1^2 + t_2^2) x_{e_1+e_3} = \frac{\partial x}{\partial t_3}\cdot \frac{\partial x}{\partial t_4} + (-t_1^2 + t_2^2) \frac{\partial^2 x}{\partial t_1 \partial t_3} \] and the solution $\varphi = (t_1 + t_2)t_3 + (t_1 - t_2)t_4$.
Then 
\[\Supp(\varphi)=\{e_1+e_3, e_2+e_3, e_1+e_4, e_2+e_4\}.\]
On the other hand, for $S \in \mathcal{P}(\mathbb{Z}^4_{\geq0})$ we obtain
\begin{align*}
\trop(P)(S)=\Vert(&\Vert(\Theta_{\trop}(e_3)S+\Theta_{\trop}(e_4)S) \\ &\cup \mathrm{Vert}(2e_1+\Theta_{\trop}(e_1+e_3)S) \\ &\cup \mathrm{Vert}(2e_2+\Theta_{\trop}(e_1+e_3)S).
\end{align*}

If we set $S=\Supp(\varphi)$, we obtain 
\begin{align*}
\trop(P)(S)=\Vert(&\Vert(\{2e_1,e_1+e_2,2e_2\})\cup \{2e_1\} \, \cup \{2e_2\}).
\end{align*}
Since 
\[\Vert(\{2e_1,e_1+e_2,2e_2\})=\{2e_1,2e_2\},\]
every $J \in \trop(P)(S)$, namely $2e_1$ and $2e_2$, occurs in three monomials in $\trop(P)(S)$ and $S$ is indeed in $\Sol(\trop(P))$.
Note that in the Newton polygon the point $e_1+e_2$, which is not a vertex, comes from only one monomial in $\trop(P)(S)$. Therefore, it is necessary to consider the vertices instead of the whole Newton polygon such that for instance Proposition~\ref{easy-direction} holds.
\end{example}

\begin{remark}\label{example:CountableField}
The Fundamental Theorem for systems of partial differential equations over a countable field such as $\overline{\mathbb{Q}}$ does in general not hold anymore by the following reasoning. 
According to~\cite[Corollary 4.7]{DenefLipshitz}, there is a system of partial differential equations $G$ over $\mathbb{Q}$ having a solution in $\mathbb{C}[[t_1, \ldots, t_m]]$ but no solution in $\overline{\mathbb{Q}}[[t_1, \ldots, t_m]]$. Taking $K = \overline{\mathbb{Q}}$ as base field, we have $\Sol(\trop(G)) \neq \emptyset$ because $\Sol(\trop(G))=\Supp(\Sol(G))$ is non-empty in $\mathbb{C}$, but $\Supp(\Sol(G)) = \emptyset$.
\end{remark}

In this  paper we focus on formal power series solutions. A natural extension would be to consider formal Puiseux series instead. The following example shows that with the natural extension of our definitions to Puiseux series, the fundamental theorem does not hold, even for $m = n = 1$. 

\begin{example} \label{example:Puiseux}
Let us consider $R_{1,1}=K[t]\{x\}$ and the differential ideal generated by the differential polynomial \[P=2tx_{(1)}-x_{(0)}=2t \cdot \frac{\partial x}{\partial t}-x.\]
There is no non-zero formal power series solution $\varphi$ of $P=0$, but $\varphi=ct^{1/2}$ is for any $c \in K$ a solution. In fact, $\{\varphi\}$ is the set of all formal Puiseux series solutions.

\medskip
On the other hand, let $S \in \mathcal{P}(\mathbb{Z}_{\ge0})$. Then every point $J$ in
\begin{align*}
\trop(P)(S)&=\Vert(\Vert(\{1\}+(\Theta_{\trop}(1)S) \cup \Vert(S))
\end{align*}
occurs in both monomials except if $0 \in J$. Hence, for every $S \in \Sol(\trop(P))$ we know that $0 \notin S$. 
For every $I \ge 0$ we have that \[\Theta(I)P=2tx_{(I+1)}+(2I-1)x_{(I)} \in G\] and 
\begin{align*}
\trop(\Theta(I)P)(S)&=\Vert(\Vert(\{(1)\}+(\Theta_{\trop}((I+1)S) \cup \Vert(\Theta_{\trop}(I)S)).
\end{align*}
Similarly to above, every $J \in \trop(\Theta(I)P)(S)$ occurs in both monomials except if $I \in J$. Therefore, $I \notin S$ and so the only $S \in \mathcal{P}(\mathbb{Z}_{\ge0})$ with $S \in \Sol(\trop([P]))$ is $S = \emptyset$. Hence, $\Sol(\trop([P]))=\{\emptyset\}=\Supp(\Sol([P])).$

\medskip
Now we want to consider formal Puiseux series solutions instead of formal power series solutions. First,  $\{\Supp(\varphi)\}=\{\emptyset,\{1/2\}\}$.
Now let us set for $S \in \mathbb{Q}^m$ and $J=(j_1,\ldots,j_m) \in \mathbb{Z}_{\ge 0}^m$, the set $\Theta_{\trop}(J)S$ defined as
\[\left\{ (s_1-j_1, \ldots, s_m -j_m) \ \left| \ 
\begin{array}{cccc} (s_1, \ldots, s_m) \in S, \\
 \forall 1 \le i \le m, \ s_i<0 \text{ or } s_i-j_i \notin \mathbb{Z}_{<0} 
 \end{array}
 \right. \right\}\]
This is the natural definition, since only in the case when the exponent of a monomial is a non-negative integer, the derivative can be equal to zero. We have that $\Theta_{\trop}(J)(\Supp(\psi)) = \Supp(\Theta(J)\psi)$ for all Puiseux series $\psi$. For $\Val_J$ and the operations $\odot$ and $\oplus$ the definitions remain unchanged.

\medskip
Let $Q \in [P]$. Then \[Q=\sum_{k \in \mathcal{I}}Q_k \cdot \Theta(I_k)P\] for some index-set $\mathcal{I}$ and $Q_k \in R_{m,n}$. 
For every $I_k$ we know that $\Supp(\varphi)=\{(1/2)\} \in \Sol(\trop(\Theta(I_k)P))$. Let $\alpha \in \mathbb{Q} \cap (0,1)$. Then for every $J \in \trop(\Theta(I_k)P) \in \mathbb{Z}_{\ge 0}$ we have that $\Theta_{\trop}(J)\{(1/2)\}=\Theta_{\trop}(J)\{(\alpha)\}+\{(1/2-\alpha)\}$. 
Thus, $\{\alpha\} \in \Sol(\trop(\Theta(I_k)P))$. 
Since 
\[\trop(G_k \cdot \Theta(I_k)P)=\trop(G_k)\odot \trop(\Theta(I_k)P),\]
the solvability remains by multiplication with $G_k$. Therefore, $\{\alpha\} \in \Sol(\trop(G_k \cdot \Theta(I_k)P))$ and consequently, $\{\alpha\} \in \Sol(\trop([P]))$. However, $\{\alpha\} \notin \Supp(\Sol([P]))$ for $\alpha \neq 1/2$.

\medskip
We remark that $P$ is an ordinary differential polynomial and by similar computations as here, the straight-forward generalization from formal power series to formal Puiseux series fails for the Fundamental Theorem in~\cite{AGT16} as well.
\end{example}

We conclude this section by emphasizing that the Fundamental Theorem may help to find necessary conditions on the support of solutions of systems of partial differential equations, but in general it cannot be completely algorithmic. In fact, according to~\cite[Theorem 4.11]{DenefLipshitz}, already determining the existence of a formal power series solution of a linear system with formal power series coefficients is in general undecidable.

\subsection*{Acknowledgements}
This work was started during the Tropical Differential Algebra workshop, which took place on December 2019 at Queen Mary University of London. We thank the organizers and participants for valuable discussions and initiating this collaboration. In particular, we want to thank Fuensanta Aroca, Alex Fink, Jeffrey Giansiracusa and Dima Grigoriev for their helpful comments during this week.

\appendix

\bibliography{tpdag}
\bibliographystyle{plain}

\noindent 
\textsc{Research Institute for Symbolic Computation (RISC)\\ 
Johannes Kepler University Linz, Austria.}
\par\noindent
Web: \url{risc.jku.at/m/sebastian-falkensteiner}
\\

\noindent
\textsc{Centro de Investigaci\'on en Matem\'aticas, A.C. (CIMAT)\\
Jalisco S/N, Col. Valenciana CP. 36023 Guanajuato, Gto, M\'exico.
}
\\
e-mail: \verb+cristhian.garay@cimat.mx+
\par\medskip\noindent

\noindent
\textsc{Institut de recherche math\'ematique de Rennes, UMR 6625 du CNRS\\ Universit\'e de Rennes 1, Campus de Beaulieu\\ 35042 Rennes cedex (France)}
\\
e-mail: \verb+mercedes.haiech@univ-rennes1.fr+
\par\medskip\noindent

\noindent
\textsc{Bernoulli Institute, University of Groningen, The Netherlands}\\
Web: \url{https://www.rug.nl/staff/m.p.noordman/}

\par\medskip\noindent
\textsc{School of Mathematical Sciences, Queen Mary University of London}\\ e-mail: \verb+z.toghani@qmul.ac.uk+

\par\medskip\noindent
\textsc{Univ. Lille, CNRS, Centrale Lille, Inria, UMR 9189 - CRIStAL - Centre de Recherche en Informatique Signalet Automatique de Lille, F-59000 Lille, France}\\
Web: \url{pro.univ-lille.fr/francois-boulier}

\end{document}